\numberwithin{equation}{section}
\newcommand\QQ{\mathbb{Q}}
\newcommand\ZZ{\mathbb{Z}}
\theoremstyle{plain}
\newtheorem{theorem}{Theorem}
\numberwithin{theorem}{section}
\newtheorem{lemma}[theorem]{Lemma}
\newtheorem{cor}[theorem]{Corollary}
\newtheorem{prop}[theorem]{Proposition}
\newtheorem*{lemma*}{Lemma}
\theoremstyle{definition}
\newtheorem{definition}[theorem]{Definition}
\newcommand{\R}{\mathbb{R}}
\newcommand{\N}{\mathbb{N}}
\newcommand{\Z}{\mathbb{Z}}
\newcommand{\Q}{\mathbb{Q}}
\newcommand{\co}{\mathcal{O}}
\DeclareMathOperator\Tr{Tr}
\newcommand*\py{\mathrm{P}}
\newcommand*\defined[1]{\textsl{#1}}
\DeclarePairedDelimiterX\set[2]\lbrace\rbrace{\,#1\mathclose{}:\mathopen{}#2\,}
\title[Lifting problem for universal quadratic forms]{Lifting problem for universal quadratic forms}
\author{V\' \i t\v ezslav Kala}
\address{Charles University, Faculty of Mathematics and Physics, Department of Algebra, Sokolov\-sk\' a 83, 18600 Praha~8, Czech Republic}
\email{vita.kala@gmail.com}
\author{Pavlo Yatsyna}
\address{Charles University, Faculty of Mathematics and Physics, Department of Algebra, Sokolov\-sk\' a 83, 18600 Praha~8, Czech Republic}
\email{pvyatsyna@gmail.com}
\thanks{Both authors were supported by the project PRIMUS/20/SCI/002 from Charles University.
	The first author was supported by Czech Science Foundation GA\v CR, grant 17-04703Y, and by Charles University Research Centre program UNCE/SCI/022. Most of the work on this article was done when the second author was visiting Charles University, also supported by Czech Science Foundation GA\v CR, grant 17-04703Y}
\keywords{universal quadratic form, totally real number field, trace form, lattice of E-type, Dedekind zeta function, additively indecomposable integer}
\subjclass[2010]{11E12, 11E25, 11R11, 11R18, 11H06, 11H55}
\date{\today}
\begin{document}
	
\maketitle

\begin{abstract}
	We study totally real number fields that admit a universal quadratic form whose coefficients are rational integers. We show that $\Q(\sqrt 5)$ is the only such real quadratic field, and that among fields of degrees 3, 4, 5, and 7 which have principal codifferent ideal, the only one is $\QQ(\zeta_7+\zeta_7^{-1})$, over which the form $x^2+y^2+z^2+w^2+xy+xz+xw$ is universal.
	Moreover, we prove an upper bound for Pythagoras numbers of orders in number fields that depends only on the degree of the number field.
\end{abstract}

\section{Introduction}\label{s:1}

The question which integers can be represented by a given quadratic form has long played a central role in number theory, involving works of mathematicians such as Diophantus, Brahmagupta, Fermat, Euler, and Gauss. Of particular interest have been universal quadratic forms, i.e., positive definite forms that represent all natural numbers.
The first example of the sum of four squares $x^2+y^2+z^2+w^2$ was followed by many others, including classification of quaternary diagonal universal forms by Ramanujan and Dickson, and culminating in the 15- and 290- theorems of Conway-Schneeberger and Bhargava-Hanke \cite{Bh, BH}.

A natural generalization has been the study of universal quadratic forms over number fields $K$ and their rings of algebraic integers $\co_K$. 
When the field has a complex embedding, every quadratic form over $K$ is indefinite, and so it is comparatively easy to understand which algebraic integers it represents. For example, Siegel \cite{Si3} and Estes-Hsia \cite{EH} considered complex fields with universal sums of 5 and 3 squares (respectively) and characterized them.
Hence of particular interest are totally real number fields where one expects to have a rich and hard theory of representations by totally positive quadratic forms.

In 1941 Maa\ss \ \cite{Ma} used theta series to show that the sum of three squares is universal over the ring of integers of $\Q(\sqrt 5)$.
Siegel \cite{Si3} then in 1945 proved that the sum of any number of squares is universal only over the number fields  $K=\Q, \Q(\sqrt 5)$. 
However, universal forms exist over every totally real number field \cite{HKK}, and there have been numerous recent results concerning them, see, e.g., \cite{CKR, EK, Ea, Ki, Ki2, CI, Sa, De, BK, BK2, Ka, Ya, KS, CL+, KTZ} and the references therein.

Almost all of these results involve quadratic forms which do not have rational integers as all of their coefficients. 
This is not an accident, as indeed Siegel's result immediately implies that
a diagonal positive definite quadratic form with $\Z$-coefficients can be universal only over $K=\Q, \Q(\sqrt 5)$. This suggests the following natural generalization: when is it possible for a positive definite quadratic form with $\Z$-coefficients to be universal over the ring of integers $\co_K$ of a number field $K$? Or more generally, one can consider two (totally real) number fields $K\subset L$ and ask whether there is a quadratic form with $\co_K$ coefficients that is universal over $\co_L$. This is sometimes known as the lifting problem for universal quadratic forms over number fields; the main goal of the present article is to consider it for \emph{$\Z$-forms}, i.e., positive definite quadratic form with $\Z$-coefficients.

We completely solve this problem for real quadratic fields by proving

\begin{theorem}\label{Thm:1}
	There does not exist a $\Z$-form that is universal over a real quadratic number field $K$, unless $K=\Q(\sqrt{5}).$
\end{theorem}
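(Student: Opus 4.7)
The plan is to assume $Q(x_1,\dots,x_n)$ is a positive definite quadratic form with $\Z$-coefficients that is universal over $\co_K$ for $K = \Q(\sqrt D)$ real quadratic ($D > 1$ squarefree, $D \ne 5$), and to exhibit a concrete totally positive element of $\co_K$ which $Q$ cannot represent. The central tool is to expand in an integral basis $\{1,\omega\}$ of $\co_K$: writing $\vec x = \vec p + \vec q\,\omega$ with $\vec p,\vec q \in \Z^n$, rationality of the coefficients of $Q$ gives
\[
Q(\vec x) = Q(\vec p) + 2B(\vec p,\vec q)\,\omega + Q(\vec q)\,\omega^2,
\]
where $B$ is the bilinear form associated to $Q$. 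Setting $Q(\vec x)$ equal to a target and matching coefficients in $\{1,\omega\}$ yields two $\Z$-equations in $\vec p,\vec q$, with the strong side conditions $Q(\vec p),Q(\vec q) \in \Z_{\ge 0}$ forced by positive-definiteness.

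I would then split by the shape of the integral basis. When $D \not\equiv 1 \pmod 4$, so $\omega = \sqrt D$ and $\omega^2 = D$, I would test the totally positive element $\alpha = D + \sqrt D$; the system becomes
\[
Q(\vec p) + D\,Q(\vec q) = D, \qquad 2B(\vec p,\vec q) = 1.
\]
Non-negativity forces $(Q(\vec p),Q(\vec q))$ to equal either $(D,0)$ or $(0,1)$, so either $\vec p = 0$ or $\vec q = 0$; in either case $B(\vec p,\vec q) = 0$, contradicting the second equation. When $D \equiv 1 \pmod 4$ with $D \ge 13$, so $\omega = (1+\sqrt D)/2$ and $\omega^2 = \omega + (D-1)/4$, I would test $\alpha = u_0 + \omega$ with $u_0 = \lceil (\sqrt D - 1)/2 \rceil$, the least integer making $\alpha$ totally positive. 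The system becomes
\[
Q(\vec p) + \tfrac{D-1}{4}\,Q(\vec q) = u_0, \qquad 2B(\vec p,\vec q) + Q(\vec q) = 1.
\]
A short estimate---essentially $(\sqrt D - 3)(\sqrt D + 1) > 0$ for $D \ge 13$---gives $u_0 < (D-1)/4$, so the first equation forces $Q(\vec q) = 0$, hence $\vec q = 0$; but then the second collapses to $0 = 1$.

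The main obstacle, really a design choice rather than a deep difficulty, is selecting the right test element $\alpha$: it must be totally positive yet have its $1$-component so small that non-negativity rules out $Q(\vec q) \ge 1$ altogether. Precisely this strict inequality degenerates to equality at $D = 5$, where $u_0 = 1 = (D-1)/4$ makes the previously forbidden option $Q(\vec p)=0,\ Q(\vec q)=1$ newly admissible, satisfying both equations with $\vec p = 0$ and any $\vec q$ of unit $Q$-value. This boundary phenomenon explains why $\Q(\sqrt 5)$ is the genuine exception, in agreement with Maa\ss's theorem on the universality of $x^2+y^2+z^2$ over $\co_{\Q(\sqrt 5)}$.
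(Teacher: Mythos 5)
Your proof is correct, and it takes a genuinely different and more elementary route than the paper. The paper's argument runs through the machinery of Section 4: universality forces units of all signatures (Corollary \ref{cor:units}), one takes a totally positive generator $\delta=\varepsilon/f'(\omega)$ of the codifferent, observes $\Tr(\delta\cdot\omega\varepsilon^{-1})=1$, and invokes the $E$-type/trace-form result (Lemma \ref{lemma:squares}) to conclude that $\omega\varepsilon^{-1}$ is a unit, so $\mathrm{N}(\omega)=-1$, which forces $D=5$. You instead bypass indecomposables, the codifferent and Kitaoka's $E$-type theorem entirely: since $Q$ has rational coefficients, writing $\vec x=\vec p+\vec q\,\omega$ gives $Q(\vec x)=Q(\vec p)+2B(\vec p,\vec q)\omega+Q(\vec q)\omega^2$ with $Q(\vec p),Q(\vec q)\in\Z_{\ge0}$ and $2B(\vec p,\vec q)\in\Z$, and comparing coordinates in the basis $\{1,\omega\}$ against a well-chosen target ($D+\sqrt D$ for $D\equiv 2,3\ (\mathrm{mod}\ 4)$; $\lceil(\sqrt D-1)/2\rceil+\omega$ for $D\equiv 1\ (\mathrm{mod}\ 4)$, $D\ge 13$, using $u_0<(\sqrt D+1)/2\le (D-1)/4$) forces $\vec p=0$ or $\vec q=0$, killing the $\omega$-coefficient equation. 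I checked the case analysis and the inequality; both are sound, and the degeneration at $D=5$ is exactly as you describe. What each approach buys: yours is self-contained, proves the stronger fact that an explicit totally positive element is represented by \emph{no} $\Z$-form whatsoever (not just by a putative universal one), and would extend verbatim to arbitrary orders $\Z[\omega]$; the paper's proof is heavier but is deliberately an instance of the general framework (trace forms $t_\delta$, minimal vectors, Siegel's zeta formula) that carries the higher-degree results of Theorem \ref{Thm:2}, where no such explicit two-coordinate bookkeeping is available.
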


Over $\Q(\sqrt 5)$, there are indeed quite a few universal $\Z$-forms, such as $x^2+y^2+z^2$ \cite{Ma}, $x^2+y^2+2z^2$ \cite{CKR}, $x^2+xy+y^2+z^2+zw+w^2$ \cite{De}, $x^2+y^2+z^2+w^2+xy+xz+xw$ \cite{De2}.
Lee \cite{Le} classified all quaternary classical universal forms (recall that a form is classical if all its off-diagonal coefficients are divisible by 2), but his list does not give any other examples that are $\Z$-forms. We are not aware of any classification of universal $\Z$-forms over $\Q(\sqrt 5)$ (or any other number field); this is another very interesting open problem.

We then turn our attention to number fields of higher degree, where the situation is much more convoluted.

In the spirit of the study of the minimal number of variables required by a universal form, we first show in Corollary \ref{cor:univ class} that there are no classical universal $\Z$-forms of rank strictly less than 6 over any totally real number field (of arbitrary degree), except for $\Q, \Q(\sqrt 5)$.

We finally focus on the existence of universal $\Z$-forms over certain number fields of small degree.

\begin{theorem}\label{Thm:2}
	There does not exist a totally real number field $K$ of degree $1,2,3,4,5$ or $7$ which has principal codifferent ideal and a universal $\Z$-form defined over it, unless $K=\Q, \Q(\sqrt{5})$ or $\QQ(\zeta_7+\zeta_7^{-1})$. 
	
	The $\Z$-form $x^2+y^2+z^2+w^2+xy+xz+xw$ is universal over $\QQ(\zeta_7+\zeta_7^{-1})$.
\end{theorem}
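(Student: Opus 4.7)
The plan is to attack the two parts of the theorem separately, using rather different toolkits. For the existence of a universal $\Z$-form over $K = \QQ(\zeta_7+\zeta_7^{-1})$ — a totally real cubic field of class number one with discriminant $49$ — I would proceed by explicit verification. First, enumerate up to multiplication by squares of totally positive units a set of representatives of the additively indecomposable totally positive integers of $\co_K$: by a theorem of Brunotte this set is finite, and for a field of this discriminant is short enough to list out by hand. Next, verify directly that the form $Q(x,y,z,w) = x^2+y^2+z^2+w^2+xy+xz+xw$ represents each representative. To pass from indecomposables to \emph{all} totally positive integers, use an inductive splitting: for any totally positive $\alpha$ that is not itself indecomposable, write $\alpha = \beta + \gamma$ with $\beta,\gamma$ totally positive and already represented, exploiting the fact that $Q$ represents $1$ and that its ``diagonal plus three crossed terms'' shape allows independent modifications of the last three coordinates.

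For the non-existence part, the central tool is the codifferent trace form. Fix a totally positive generator $\delta$ of $\mathcal{D}_K^{-1}$, and for a hypothetical universal $\Z$-form $Q$ of rank $n$ over $\co_K$ equip $\co_K^{\,n}$ with the $\Z$-valued symmetric bilinear form $(v,w) \mapsto \Tr_{K/\Q}\bigl(\delta\, B_Q(v,w)\bigr)$, where $B_Q$ is the polarization of $Q$. This yields a positive-definite $\Z$-lattice $\Lambda$ of rank $nd$, with $d=[K:\Q]$, and universality of $Q$ forces $\Lambda$ to represent $\Tr(\delta\alpha)$ for every totally positive $\alpha \in \co_K$; in particular the minimum of $\Lambda$ is controlled by $\min_{\alpha \succ 0}\Tr(\delta\alpha)$, a quantity depending only on $K$. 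I would then invoke $E$-type properties of the trace lattice $(\co_K,\Tr(\delta\,\cdot))$ to force the minimum vectors of $\Lambda$ to be pure tensors $e_i \otimes u$, with $u$ an additively indecomposable element of $\co_K$ of minimum weighted trace. Iterating this analysis over the short totally positive elements constrains the Gram matrix of $Q$ sufficiently to be combined with Corollary~\ref{cor:univ class} and with Siegel's classical theorem on universal sums of squares.

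The remaining candidate fields would be pinned down by Odlyzko-type lower bounds on $|\Delta_K|$ extracted from the Dedekind zeta function, which in each of the allowed degrees $d \in \{3,4,5,7\}$ leave only finitely many totally real fields with principal codifferent to examine. For each survivor, I would attempt an explicit elimination by exhibiting a specific totally positive integer — typically an additively indecomposable element close to the trace minimum — whose image cannot be represented by any $\Z$-form compatible with the structural constraints derived in the previous step. The main obstacle I anticipate lies precisely here: the borderline cubic and quartic fields whose discriminants narrowly satisfy the Odlyzko bound but whose indecomposable structure must be analyzed case by case, each likely demanding a bespoke obstruction drawn from explicit computation in $\co_K$.
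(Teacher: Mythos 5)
Your plan for the existence half has a genuine gap. Knowing that $Q(x,y,z,w)=x^2+y^2+z^2+w^2+xy+xz+xw$ represents every indecomposable of $\co_K$ does not let you conclude that $Q$ represents every totally positive integer: if $\alpha=\beta+\gamma$ with $\beta,\gamma$ represented by $Q$, the sum is represented by the orthogonal sum $Q\perp Q$, not by $Q$ itself, and the appeal to ``independent modifications of the last three coordinates'' does not repair this --- there is no inductive splitting that stays inside a fixed quaternary form. This is exactly why the paper separates the two statements: representing all indecomposables only yields universality of $Q^{\perp m}$ for some $m$ (Proposition \ref{prop:ind-univ}, giving rank $24$, improvable to $18$), whereas the universality of the rank-$4$ form itself is proved in Theorem \ref{prop:7univ} by a completely different mechanism: $Q$ has class number one over $\QQ(\zeta_7+\zeta_7^{-1})$ (Kirschmer's classification of one-class genera, checkable via the mass formula), so the local-global principle applies, and one verifies local universality at the real places, at the non-dyadic places (unimodularity plus rank $\geq 3$), and at the inert dyadic place by checking all $32$ square classes. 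Without some such class-number-one or analytic input, your enumeration of indecomposables cannot reach the stated conclusion.

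For the non-existence half, your setup (the $\Z$-lattice $\Tr(\delta B_Q)$, Kitaoka's $E$-type theorem forcing minimal vectors to be split, and the consequences for indecomposables and units, as in Proposition \ref{prop:indecom}, Corollary \ref{cor:units}, and Lemma \ref{lemma:squares}) matches the paper, but the quantitative step that produces a finite list of fields is missing and the substitute you propose goes the wrong way. Odlyzko-type bounds are \emph{lower} bounds on $|\Delta_K|$ in terms of $d$; they cannot leave only finitely many fields to examine. What is needed is an \emph{upper} bound on $|\Delta_K|$ extracted from the hypothetical universal form, and this is precisely what Siegel's formula for $\zeta_K(-1)$ supplies: for $d=2,3,4,5,7$ (and only for these degrees) the formula involves solely the elements $\alpha\in\co_K^{\vee,+}$ with $\Tr(\alpha)=1$; principality of the codifferent together with Lemma \ref{lemma:squares} forces each such $\alpha$ to be a unit multiple of the generator, so each term $\sigma((\alpha)(\co_K^{\vee})^{-1})$ equals $1$, and the count of such $\alpha$ is at most $d$ because they give minimal vectors of the classical form $t_\delta$ of minimum $1$ (Lemma \ref{lem:classical}). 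Combined with $\zeta_K(2)>1$ and the functional equation this yields $|\Delta_K|<|(4\pi^2)^d d\, b_d|^{2/3}$, which is where both the restriction to these degrees and the principal-codifferent hypothesis earn their keep --- neither is explained by your outline. Finally, the surviving candidates are not eliminated by ``constraining the Gram matrix'': the quartic field of discriminant $725$ is ruled out by showing $2(\omega+2)$ is not a sum of squares (via Proposition \ref{lemma:3} and Theorem \ref{thm:cs}), and $\QQ(\zeta_{11}+\zeta_{11}^{-1})$ by computing $\min(t_{\beta^{-1}})=5$ for $\beta=\omega+2$ of norm $11$ and invoking Proposition \ref{prop:indecom} to force $\beta$ to be a square, a contradiction.
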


We prove this result as Theorems \ref{thm 16} and \ref{prop:7univ}. Note that the codifferent is principal for example when $\co_K=\Z[\alpha]$ for some $\alpha$ or when $K$ has class number one.

The limiting assumptions in the theorem come from the tools that we use. First of all, the composition of a $\Z$-form with (twisted) trace form decomposes as a tensor product, and so
we study tensor products of positive definite $\Z$-lattices and their minimal vectors. In particular, lattices of $E$-type (which were first introduced by Kitaoka \cite{Kt3}) play a prominent role in Section \ref{s:4}. We use them to show that if certain additively indecomposable algebraic integers are represented by a $\Z$-form, then they have to be squares. This in turn for example implies that if a number field possesses a universal $\Z$-form, then it has units of all signatures.
Not every lattice is of $E$-type, but in the small degrees considered in Theorem \ref{Thm:2} this poses no restrictions.

Our second main tool is Siegel's formula for the value of Dedekind zeta function at $s=-1$ \cite{Si1, Z}, which expresses this value in terms of elements of the codifferent of small trace. In particular, we are interested in elements of trace 1, and these are the only ones that appear in the formula for degrees $2,3,4,5$ or $7$. When the codifferent is principal, the resulting bound on their number gives
(together with the results of Section \ref{s:4}) an estimate on the number of minimal vectors of the trace form. However, this estimate can hold only for very few number fields, which in turn implies the theorem.
It is tempting to try to apply Siegel's formula also for higher degrees by (for example) using elements of trace 2 to deduce the existence of elements of trace 1. Unfortunately, the resulting bounds seem to be too weak to be of much use.

In several of the proofs we use computer calculations to deal with specific number fields and quadratic forms. All of these computations were done in Magma \cite{Magma} and are straightforward.

Note that Theorem \ref{Thm:1} is a special case of Theorem \ref{Thm:2}. However, in the quadratic case we have more explicit control of elements of small trace, and so the proof of Theorem \ref{Thm:1} is more elementary and does not require the use of Dedekind zeta function.

\

The question whether there exists a universal $\Z$-form over a number field not covered by Theorem \ref{Thm:2} remains open and may be very hard. Our results provide some clues towards conjecturing that there are perhaps no number fields with $\Z$-forms except for $\Q, \Q(\sqrt{5})$, and $\QQ(\zeta_7+\zeta_7^{-1})$, but the evidence is of course quite weak. Even more broadly, the following general lifting problem question remains completely open.

\textbf{Question.} \textit{Is there a totally real number field $K$ such that there are infinitely many totally real number fields $L\supset K$ that admit a universal quadratic form with $\co_K$-coefficients?}

\

The auxiliary results that we obtain are also useful for the study of Pythagoras numbers of orders $\co$ in totally real fields. While we know that typically not all totally positive integers are sums of squares, we can ask what is the smallest integer $m$ such that if an element is the sum of squares, then it is the sum of at most $m$ squares.
This integer $m$ is called the Pythagoras number of the order $\co$ and is known to be always finite, but can be arbitrarily large \cite{Sc2} (cf. also \cite{Po}).
In the aforementioned article, Scharlau asked whether Pythagoras numbers of orders are bounded by the degree of the corresponding number field. We answer this question affirmatively as Corollary \ref{cor:pythagoras}.

Let us note that while we state most of our results only for the maximal order $\co_K$ (as it is, arguably, the most interesting case), many of them can probably be quite straightforwardly extended 
to general orders, at least the material in Sections \ref{s:2}, \ref{s:3}, \ref{s:4}, and \ref{s:6}. However, the references that we use typically also deal only with the full ring of integers $\co_K$, and so this extension would require reproving all these referenced results in the more general setting.

\

Finally, let us note that besides from studying representations of integers by quadratic forms, there have been numerous works considering representations of quadratic forms by quadratic forms and, in particular, by the sum of squares, e.g., \cite{BI, Ic, KO, Sa1, Oh, KO2, KO3, JKO, BC+} (in fact, we use some of them in the proof of Corollary \ref{cor:pythagoras}). Most of them deal with forms over $\Z$, but it is another exciting direction of future research to consider the situation over number fields in detail and to study the lifting problem for representations of quadratic forms of a given rank $>1$.

\section*{Acknowledgments}

We thank Tom\' a\v s Hejda for our interesting conversations and for his help with computations related to this research, and to the anonymous referees for numerous useful comments that have helped to improve the paper and its exposition.

\section{Preliminaries}\label{s:2}

Throughout the article, $K$ will denote a totally real number field of degree $d$ over $\mathbb Q$ with the ring of integers $\co_K$. For an order $\co\subset\co_K$ we fix an integral basis $\omega_1, \dots, \omega_d$ and denote its group of units by $\co^\times$.

Let $\sigma_1=\mathrm{id}, \sigma_2, \dots, \sigma_d: K\hookrightarrow\mathbb R$ be the (distinct) real embeddings of $K$.
The norm of $\alpha\in K$ is then $\mathrm{N}(\alpha)=\sigma_1(\alpha)\cdots \sigma_d(\alpha)$ and its trace is $\Tr(\alpha)=\sigma_1(\alpha)+\dots+ \sigma_d(\alpha)$.

We write $\alpha \succ \beta$ to mean $\sigma_i(\alpha)>\sigma_i(\beta)$ for all $1\leq i\leq d$; moreover, $\alpha\succeq \beta$ denotes $\alpha\succ \beta$ or $\alpha=\beta$. An algebraic integer $\alpha\in\co_K$ is \defined{totally positive} if $\alpha\succ 0$; the semiring of totally positive integers that lie in the order $\co$ will be denoted $\co^+$; moreover we let
$\co^{\times,+}=\co^{\times}\cap\co^{+}$.
By the \defined{signature} of $\alpha\in K$ we mean the $d$-tuple of signs of $\sigma_i(\alpha)$.

We say that $\alpha\in\co^+$ is \defined{indecomposable} if it can not be decomposed as the sum of two totally positive elements of $\co$, or equivalently if there is no $\beta\in\co^+$ such that $\alpha\succ\beta$. Indecomposable integers and their norms are quite well studied, especially over real quadratic fields \cite{DS, JK, Ka2}.

The following lemma shows that when an element has sufficiently small norm, then it has to be indecomposable. In particular, every totally positive unit is indecomposable.

\begin{lemma}\label{lemma:norm_bound}
	a) For all $\alpha_1,\alpha_2\in \co^+$ we have \[\mathrm{N}(\alpha_1+\alpha_2)^{1/d}\ge \mathrm{N}(\alpha_1)^{1/d}+\mathrm{N}(\alpha_2)^{1/d}.\]
	
	b) If $\beta\in\co^+$ has norm $\mathrm{N}(\beta)<2^d$, then $\beta$ is indecomposable.
\end{lemma}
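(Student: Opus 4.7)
Part (a) is the well-known super-additivity (reverse Minkowski) inequality for geometric means of positive reals, applied to the tuples $(\sigma_i(\alpha_1))_{i=1}^d$ and $(\sigma_i(\alpha_2))_{i=1}^d$, which have strictly positive entries because $\alpha_1,\alpha_2\succ 0$. The cleanest route is to normalize: after dividing both sides by $\mathrm{N}(\alpha_1+\alpha_2)^{1/d}=\prod_i\sigma_i(\alpha_1+\alpha_2)^{1/d}$, the desired inequality becomes
\[
1 \;\ge\; \Bigl(\prod_{i=1}^d \tfrac{\sigma_i(\alpha_1)}{\sigma_i(\alpha_1)+\sigma_i(\alpha_2)}\Bigr)^{\!1/d} + \Bigl(\prod_{i=1}^d \tfrac{\sigma_i(\alpha_2)}{\sigma_i(\alpha_1)+\sigma_i(\alpha_2)}\Bigr)^{\!1/d}.
\]
I would then bound each geometric mean on the right by the corresponding arithmetic mean via AM-GM and add the two resulting inequalities; the sum telescopes to $\tfrac{1}{d}\sum_i 1 = 1$, proving (a).

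For part (b), I would argue by contrapositive: assume $\beta$ is decomposable, write $\beta=\alpha_1+\alpha_2$ with $\alpha_1,\alpha_2\in\co^+$, and observe that each $\mathrm{N}(\alpha_j)$ is a positive rational integer (the $\alpha_j$ are nonzero totally positive algebraic integers), hence $\mathrm{N}(\alpha_j)\ge 1$. Plugging this into part (a) gives $\mathrm{N}(\beta)^{1/d}\ge 1+1=2$, i.e.\ $\mathrm{N}(\beta)\ge 2^d$, contradicting the hypothesis.

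There is no serious obstacle here; both parts are elementary. The only point worth double-checking is that $\alpha_1,\alpha_2\in\co^+$ (in particular nonzero) forces each archimedean component to be strictly positive, which is exactly what is needed both to divide by $\sigma_i(\alpha_1)+\sigma_i(\alpha_2)$ in (a) and to ensure $\mathrm{N}(\alpha_j)\ge 1$ in (b).
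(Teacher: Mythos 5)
Your proof is correct. Part (b) is exactly the paper's argument: decompose $\beta=\alpha_1+\alpha_2$, note $\mathrm{N}(\alpha_j)\ge 1$ since each $\alpha_j$ is a nonzero totally positive algebraic integer, and conclude $\mathrm{N}(\beta)\ge 2^d$ from part (a). For part (a), the paper simply invokes a generalized H\"older inequality from Hardy--Littlewood--P\'olya, namely $\prod_{i=1}^{d}\bigl(\sum_{j=1}^{2} a_{ij}^{d}\bigr)^{1/d}\ge \sum_{j=1}^{2}\prod_{i=1}^{d} a_{ij}$ with $a_{ij}=\sigma_i(\alpha_j)^{1/d}$, whereas you prove the same superadditivity of the geometric mean from scratch: normalize by $\mathrm{N}(\alpha_1+\alpha_2)^{1/d}$, bound each of the two resulting geometric means by the corresponding arithmetic mean via AM--GM, and observe that the two arithmetic means sum to $1$ because $\tfrac{\sigma_i(\alpha_1)}{\sigma_i(\alpha_1)+\sigma_i(\alpha_2)}+\tfrac{\sigma_i(\alpha_2)}{\sigma_i(\alpha_1)+\sigma_i(\alpha_2)}=1$ for each $i$. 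This is in fact the standard proof of the inequality the paper cites, so the mathematical content is the same; your version buys self-containedness (only AM--GM is needed, and the strict positivity of all $\sigma_i(\alpha_j)$ justifies the division), while the paper's citation keeps the argument shorter. No gaps.
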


\begin{proof}
	Both parts are easy to show and quite well-known, but let us include the proofs for reader's convenience:
	
	a) follows by a simple use of H{\"o}lder's inequality \cite[3.1]{O2}: By (a version of) generalized H\" older's inequality \cite[Theorem 12]{HLP} we have 
	$$\prod_{i=1}^r\left(\sum_{j=1}^s a_{ij}^r\right)^{1/r}\geq\sum_{j=1}^s\prod_{i=1}^r a_{ij}, \mathrm{\ where\ } r,s\in\Z^+ \mathrm{\ and\ } a_{ij}\in\R^+.$$
	Setting $r=d, s=2$, and $a_{ij}=\sigma_i(\alpha_j)^{1/d}$ we get
	$$\mathrm{N}(\alpha_1+\alpha_2)^{1/d}=\prod_{i=1}^d\left(\sigma_i(\alpha_1+\alpha_2)\right)^{1/d}\geq
	\prod_{i=1}^d \sigma_i(\alpha_1)^{1/d}+\prod_{i=1}^d \sigma_i(\alpha_2)^{1/d}=
	\mathrm{N}(\alpha_1)^{1/d}+\mathrm{N}(\alpha_2)^{1/d}.$$	
	
	b) Assume that $\beta$ is decomposable as $\alpha_1+\alpha_2$. Then $2>\mathrm{N}(\beta)^{1/d}=\mathrm{N}(\alpha_1+\alpha_2)^{1/d}\ge \mathrm{N}(\alpha_1)^{1/d}+\mathrm{N}(\alpha_2)^{1/d}\geq 1+1,$
	which is not possible.
\end{proof}

We denote $\co^{\vee}:=\{\beta\in K:\Tr(\beta\co)\subset\ZZ\}$ the \defined{codifferent} of $\co$; $\co^{\vee,+}$ is the semiring of all totally positive elements of $\co^{\vee}$.
Recall that if $\co_K=\Z[\omega]$ for some $\omega$ with minimal polynomial $f(x)\in\Z[x]$, then the codifferent is the principal fractional ideal $\co_K^{\vee}=\frac 1{f'(\omega)}\co_K$ 
\cite[Proposition 4.17]{N}.

\medskip

We will often work with positive definite quadratic forms $Q\in\Z[x_1,\ldots,x_r]$, i.e., $Q(x)=\sum_{i\ge j}a_{ij}x_ix_j$ with $a_{ij}\in\Z$, and refer to such quadratic forms as \defined{$\Z$-forms} from now on; $r$ is the arity (or rank) of $Q$. Recall that $Q$ is \defined{positive semidefinite} if $Q(x)\geq 0$ for all $x\in\R^r$, and \defined{positive definite} if moreover $Q(x)=0$ implies $x=0$.
If $a_{ij}\in 2\Z$ for all $i\not=j$, then we say that $Q$ is \defined{classical} $\Z$-form (and non-classical, otherwise). 

For a given quadratic form $Q$ we can define the bilinear form $B_Q\in\Z[x_1,\dots,x_r]$ by $B_Q(x,y)=(Q(x+y)-Q(x)-Q(y))/2$. Note that if $e^{(1)}=(1,0,\dots,0),e^{(2)}=(0,1,0,\dots,0), \dots, e^{(r)}=(0,\dots,0,1)$, then $B_Q(e^{(i)},e^{(i)})=a_{ii}$ and $B_Q(e^{(i)},e^{(j)})=a_{ij}/2$ for $i\neq j$; the $r\times r$ matrix $M_Q:=(B_Q(e^{(i)},e^{(j)}))$ is called the \defined{Gram matrix} of $Q$.
 We have $Q(x)=x^T M_Q x$ and $B_Q(x,y)=x^T M_Q y$ for $x,y\in\Z^r$ 
($A^T$ denotes the transpose of a matrix $A$).

Unless specified otherwise, throughout the paper $Q$ will denote a $\Z$-form of rank $r$. 

\medskip

We will sometimes also need to work with quadratic forms over $\co$, i.e., $Q(x)=\sum_{i\ge j}a_{ij}x_ix_j$ with $a_{ij}\in\co$. The corresponding bilinear form $B_Q$ and Gram matrix $M_Q$ are defined in the same way as over $\Z$; we have $Q(x)=x^T M_Q x$ and $B_Q(x,y)=x^T M_Q y$ for $x,y\in\co^r$.
Such a form $Q$ is \emph{totally positive}  if $Q(a)\succ 0$ for all $a\in\co^r, a\neq 0$.

A $\Z$-form $Q$ can be naturally viewed also as a totally positive quadratic form $Q^{(\co)}$ over $\co$ with the same Gram matrix $M_{Q^{(\co)}}=M_Q$; although we will not explicitly distinguish between the two, as it should lead to no confusion.
Thus we will say that a $\Z$-form $Q$ represents an element $\alpha\in\co^+$ over the order $\co$ if  $Q(v)=\alpha$ for some $v\in \co^r$. We say that $Q$ is \defined{universal over $\co$} if it represents every element  $\alpha\in\co^+$ over $\co$. When dealing with the maximal order $\co_K$, we often just say that $Q$ is universal (or universal over $K$ to specify the number field).

\medskip

Sometimes it is useful to work in lattice-theoretic language, which we now briefly introduce.
A \defined{totally positive quadratic space over $K$} is an $r$-dimensional vector space $V$ over the field $K$ equipped with a symmetric bilinear form $B: V\times V\to V$ such that the associated quadratic form $Q(v)=B(v,v)$ attains totally positive values at all non-zero $v\in V$. A \defined{quadratic $\co$-lattice} $L\subset V$ is an $\co$-submodule such that $KL=V$; $L$ is equipped with the restricted quadratic form $Q$, and so we often talk about a quadratic $\co$-lattice as the pair $(L,Q)$.
A \defined{sublattice} of the lattice $(L,Q)$ is an $\co$-submodule equipped with the restriction of the quadratic form $Q$.
Note that to a totally positive quadratic form $Q$ over $\co$ we can naturally associate the
quadratic $\co$-lattice $(\co^r,Q)$, and so we will interchangeably talk of $Q$ as a quadratic form and as an $\co$-lattice. 

\medskip

We will use Siegel's result on the non-universality of sums of squares in the paper, so let us state it here formally as a theorem.

\begin{theorem}\label{thm:siegel}\cite[Theorem 1]{Si3}
	Let $K$ be a totally real number field different from $\Q$ and $\Q(\sqrt 5)$. Then some totally positive integer $\alpha\in\co_K^+$ is not the sum of squares of elements of $\co_K$. 
	In particular, the quadratic form $x_1^2+\dots+x_r^2$ is not universal over $K$ for any $r\in\Z^+$.	
\end{theorem}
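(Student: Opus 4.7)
The plan is to combine a simple indecomposability argument with a direct exhibition of ``small'' totally positive non-squares. The central observation is: if $\alpha\in\co_K^+$ is indecomposable and $\alpha=x_1^2+\cdots+x_r^2$ with $x_i\in\co_K$, then at most one $x_i$ can be nonzero, so $\alpha$ must itself be a square in $\co_K$. Indeed, any real embedding is injective on $K^\times$, so $x_i\neq 0$ forces $x_i^2\succ 0$; were two of the $x_i$'s nonzero, then $\alpha=x_i^2+\sum_{j\neq i}x_j^2$ would decompose $\alpha$ as the sum of two elements of $\co_K^+$, contradicting indecomposability.

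Combining this with Lemma~\ref{lemma:norm_bound}(b), I see that if $x_1^2+\cdots+x_r^2$ is universal over $\co_K$ then every totally positive $\alpha\in\co_K^+$ with $\mathrm N(\alpha)<2^d$ must be a square in $\co_K$; in particular, every totally positive unit must be the square of a unit. The heart of the proof thus reduces to producing, for every totally real $K\neq\Q,\Q(\sqrt 5)$, an indecomposable element of $\co_K^+$ that is not a square in $\co_K$.

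For real quadratic $K=\Q(\sqrt d)$, I would do this by hand using the continued fraction expansion of $\sqrt d$: either the fundamental totally positive unit is not a square (a quick check of the norm equation $a^2-db^2=\pm 1$), or---when it is, as happens for $d=5$ but only rarely---one inspects indecomposable integers of the smallest available non-unit norm (typically $2$ or $3$) and verifies they are not squares. The exceptional status of $\Q(\sqrt 5)$ reflects the classical fact that its only totally positive indecomposable integers are the even powers $\tau^{2n}$ of the golden ratio, every one of which is a square.

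For higher degrees one follows Siegel's original approach via Hilbert modular forms. The theta series $\theta_Q(z)=\sum_{\vec x\in\co_K^r}e^{2\pi i\,\Tr(Q(\vec x)z)}$ is a Hilbert modular form of parallel weight $r/2$, and decomposes as an Eisenstein series plus a cusp form. The Fourier coefficient of the Eisenstein part at $\alpha$ is computable in terms of special values of Hecke $L$-functions attached to $K$, and one exhibits totally positive $\alpha$ for which this coefficient is small enough that an effective bound on the cusp form coefficient forces the total representation count to vanish. The main obstacle, and the reason the clean elementary indecomposability argument does not on its own suffice beyond the quadratic case, is obtaining sufficiently sharp analytic estimates on cusp form coefficients over an arbitrary totally real field.
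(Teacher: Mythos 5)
You should first note that the paper does not prove this statement at all: it is quoted verbatim as Siegel's theorem \cite[Theorem 1]{Si3} and used as a black box (e.g.\ in Corollary \ref{cor:univ class}), so the only ``proof'' expected here is the citation. Your attempt at a self-contained proof therefore has to be judged on its own, and it has a genuine gap: the case of degree $d\ge 3$ is a programme, not an argument. The sentence ``one exhibits totally positive $\alpha$ for which this coefficient is small enough that an effective bound on the cusp form coefficient forces the total representation count to vanish'' is exactly the content of the theorem, and you supply no such $\alpha$, no Eisenstein coefficient computation, and no cusp-form bound --- indeed you yourself name these estimates as ``the main obstacle.'' There are also structural issues with the sketch: the theta series of $x_1^2+\dots+x_r^2$ has weight $r/2$, so for odd $r$ one is outside the integral-weight Hilbert modular setting you describe; and since the claim is non-universality for \emph{every} $r$, any analytic estimate would have to be uniform in $r$ (or one must first reduce to bounded $r$, e.g.\ by the representation-by-sums-of-squares results of Section \ref{s:3}), which the sketch does not address.

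The elementary part is also weaker than you suggest. The observation that an indecomposable sum of squares must be a single square is correct (it is Lemma \ref{lemma:norm_bound} plus the decomposability of $x_i^2+\sum_{j\ne i}x_j^2$, in the same spirit as Proposition \ref{prop:indecom}), and it does reduce the problem to exhibiting a non-square indecomposable. But in the quadratic case your dichotomy is miscalibrated: the fundamental unit has norm $-1$, hence every totally positive unit is a square, for infinitely many $D$ (e.g.\ $D=2,13,17,\dots$), not ``only rarely,'' so the fallback of inspecting small-norm indecomposables is needed infinitely often and would itself require a general statement (every real quadratic $K\ne\Q(\sqrt 5)$ contains a non-square indecomposable), which you assert only through examples. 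Note that the paper's own Theorem \ref{Thm:1} handles the quadratic case by a different elementary route (the element $\omega\varepsilon^{-1}$ paired with a generator of the codifferent, via Lemma \ref{lemma:squares}), but crucially it is proving a different, weaker statement ($\Z$-forms rather than sums of squares with arbitrarily many variables understood through Siegel); it never reproves Siegel's theorem, and neither, as it stands, do you.
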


We say that a quadratic form $Q$ with the Gram matrix $M_Q\in \Z^{r\times r}$ is \defined{represented} (over $\Z$) by a quadratic form $Q'$ with the Gram matrix $M_{Q'}\in \Z^{s\times s}$ if there exists $X\in \Z^{s\times r}$ such that $M_Q=X^T M_{Q'}X$ (see \cite[$\mathsection 41B$]{O1}). Note that if $Q$ is unary, that is $r=1$ and $Q(x)=\alpha x^2$, say, then any quadratic form $Q'$ that represents $\alpha$ represents $Q$. In particular, this definition generalizes the notion of a quadratic form representing an integer to a quadratic form representing another quadratic form. (One can analogously define representation of forms by forms over $\co$.)

Two quadratic forms $Q$ and $Q'$ of the same rank are \defined{equivalent} if they represent each other, in other words, if $M_Q=X^T M_{Q'}X$ for an invertible matrix $X$.

\medskip

Let $Q_1(x_1,\dots, x_r), Q_2(y_1,\dots, y_s)$ be two quadratic forms (over $\Z$ or $\co$). Their \defined{orthogonal sum} is defined as the $(r+s)$-ary form $(Q_1\perp Q_2)(x_1,\dots, x_r,y_1,\dots, y_s):=Q_1(x_1,\dots, x_r)+ Q_2(y_1,\dots, y_s)$. In the language of lattices, if $(L_1, Q_1)$ and $(L_2, Q_2)$ are the corresponding quadratic lattices, then their orthogonal sum is the lattice $L_1\oplus L_2$ equipped with the quadratic form $Q_1\perp Q_2$.
Similarly when $Q$ is a quadratic form of rank $r$ and $m\in\N$, then $Q^{\perp m}:=Q\perp Q\perp\dots\perp Q$ ($m$-times) is a quadratic form of rank $rm$.

Let $Q$ be a $\Z$-form. If it cannot decomposed as the orthogonal sum $Q=Q_1\perp Q_2$ of $\Z$-forms $Q_1, Q_2$, we say that $Q$ is an \defined{indecomposable} form, otherwise that it is \defined{decomposable}. Each $\Z$-form $Q$ can be uniquely decomposed as the orthogonal sum of indecomposable $\Z$-forms, its \defined{indecomposable constituents}; we also have analogous notions for totally positive quadratic forms over $\co$. 

\section{Pythagoras number}\label{s:3}

 Let us start with a preliminary consideration of the rank of a $\Z$-form that represents a given totally positive element $\alpha$ of an order $\co\subset K$.

\begin{prop}\label{lemma:3}
	If $\alpha\in\co^{+}$ is represented by some $\Z$-form over $\co$, then there exists a $\Z$-form $Q$ of rank at most $d$ that represents $\alpha$ over $\co$.
	Moreover, there exists a $d$-ary positive semidefinite quadratic form $Q_0$ with $\Z$-coefficients that represents $\alpha$ over $\co$.
\end{prop}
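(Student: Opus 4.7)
The plan is to exploit the fact that $\co$ is a free $\Z$-module of rank $d$: the coordinates of any vector $v\in\co^{r'}$ live in a sub-$\Z$-lattice of $\co$ of rank at most $d$, so a representation $Q'(v)=\alpha$ should be compressible into at most $d$ variables by means of a change of basis.

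Concretely, suppose that a $\Z$-form $Q'$ of rank $r'$ satisfies $Q'(v)=\alpha$ for some $v=(v_1,\dots,v_{r'})\in\co^{r'}$. First I would consider the $\Z$-submodule $\Lambda := \Z v_1 + \cdots + \Z v_{r'} \subset \co$, which has $\Z$-rank $m \leq d$. Fix a $\Z$-basis $u_1,\dots,u_m$ of $\Lambda$, write $v_i = \sum_j b_{ij} u_j$ with $b_{ij}\in\Z$, and assemble the coefficients into an $r'\times m$ integer matrix $B$. Because each $u_j$ is itself a $\Z$-linear combination of $v_1,\dots,v_{r'}$, the matrix $B$ admits a left inverse over $\Z$ and hence has full column rank $m$.

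Next I would define the candidate form by $Q(x) := Q'(Bx)$ for $x\in\Z^m$, whose Gram matrix is $B^T M_{Q'} B$. All of its coefficients are integers (as can be seen by evaluating at $e^{(i)}$ and $e^{(i)}+e^{(j)}$), it is positive definite because $Q'$ is and $B$ has trivial kernel, and its rank $m$ is at most $d$. Setting $u := (u_1,\dots,u_m)^T \in \co^m$, the identity $Bu=v$ yields $Q(u) = v^T M_{Q'} v = Q'(v) = \alpha$, which establishes the first assertion. For the ``moreover'' clause I would simply form the orthogonal sum of $Q$ with the zero quadratic form in $d-m$ variables; the result is a $d$-ary positive semidefinite $\Z$-coefficient form $Q_0$ that represents $\alpha$ via $(u_1,\dots,u_m,0,\dots,0) \in \co^d$.

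The argument is essentially bookkeeping and presents no serious obstacle; the only point worth highlighting is the full column rank of $B$, which is precisely what guarantees that the compressed form $Q$ remains positive definite rather than merely semidefinite.
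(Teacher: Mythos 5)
Your proof is correct. The verification of full column rank is sound: writing each $u_j$ as a $\Z$-combination of the $v_i$ and substituting back, the freeness of $u_1,\dots,u_m$ gives a matrix $C\in\Z^{m\times r'}$ with $CB=I_m$, so $B$ has trivial kernel and $Q(x)=Q'(Bx)$ is indeed positive definite with integer coefficients; the identity $Bu=v$ then gives $Q(u)=\alpha$, and padding by zero variables handles the semidefinite clause. The route differs from the paper's in the order of the two constructions and in the key reduction step. The paper works with the fixed integral basis $\omega_1,\dots,\omega_d$ of $\co$: it writes $v=\sum v^{(i)}\omega_i$, forms the $d$-ary form $Q_0$ with Gram matrix $(B_{Q'}(v^{(i)},v^{(j)}))$, which is only positive \emph{semidefinite}, and then invokes a separate lemma (quotienting the lattice $(\Z^d,Q_0)$ by its radical, using the structure theorem for finitely generated abelian groups) to extract a positive definite $\Z$-form of rank at most $d$. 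You instead adapt the basis to the representing vector, taking a $\Z$-basis of the sublattice $\Z v_1+\cdots+\Z v_{r'}\subset\co$, which yields the positive definite form directly and makes the radical-quotient lemma unnecessary; the semidefinite $d$-ary form is then obtained trivially by padding. Your version is slightly more economical for the proposition as stated; the paper's version has the mild advantage that its $Q_0$ is defined with respect to the fixed integral basis and comes with the explicit relation $M_{Q_0}=V^TM_{Q'}V$ (equation \eqref{eq:d-ary}), which is quoted later in the proof of Corollary \ref{cor:pythagoras}. Note, though, that your construction also gives $M_{Q_0}=(B\,|\,0)^TM_{Q'}(B\,|\,0)$, so $Q_0$ is still represented by $Q'$ and that later application would go through just as well.
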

\begin{proof}
	Let $\omega_1,\ldots,\omega_d$ be an integral basis of the order $\co$ and let $Q'$ be a $\Z$-form of rank $r$ that represents $\alpha$, i.e., $Q'(v)=\alpha$ for some $v\in \co^r$. We can write $v=\sum^d_{i=1}v^{(i)}\omega_i,$
	where $v^{(i)}\in \Z^r$. In particular, we have that 
	\[Q'(v)=Q'\left(\sum^d_{i=1}v^{(i)}\omega_i\right)=\sum^d_{i=1}\omega^2_iQ'(v^{(i)})+2\sum_{i>j}\omega_i\omega_jB_{Q'}(v^{(i)},v^{(j)}).\]
	
	Consider the quadratic form $Q_0$ with the Gram matrix $(B_{Q'}(v^{(i)},v^{(j)}))\in\Z^{d\times d}$, i.e., 
	\[
	Q_0(x_1, \dots, x_d)=\sum_{1\leq i,j\leq d}B_{Q'}(v^{(i)},v^{(j)})x_ix_j=Q'\left(\sum^d_{i=1}v^{(i)}x_i\right).
	\]
	This quadratic form is positive semidefinite, as $Q'$ is a positive definite quadratic form. The form $Q_0$ is $d$-ary and we see that $Q_0(\omega_1,\ldots,\omega_d)=\alpha$. 
	
	Note that if $M_{Q'}$ is the $r\times r$ Gram matrix of $Q'$ and $V=(v^{(1)},\dots, v^{(d)})\in\Z^{r\times d}$ is the $r\times d$ matrix containing the vectors $v^{(i)}$ as columns, then for the $d
	\times d$ Gram matrix $M_{Q_0}$ of $Q_0$ we have 
	\begin{equation}\label{eq:d-ary}
		M_{Q_0}=V^TM_{Q'}V, 
	\end{equation}
	i.e., the quadratic form $Q'$ represents the form $Q_0$.
	
	To finish the proof by proving the existence of a $\Z$-form $Q$ that represents $\alpha$, let us prove the following lemma (that is essentially well-known, e.g., see the discussion at the beginning of \cite{Mo2}).
	
\begin{lemma*}
Let $Q_0(x)=\sum_{i=1}^d a_{ij}x_ix_j$ be a $d$-ary positive semidefinite quadratic form with $\Z$-coefficients. Then there is a $\Z$-form $Q$ of rank $\leq d$ such that $Q$ represents $Q_0$.
\end{lemma*}
	
\begin{proof}
	The \defined{radical} $\mathrm{Rad}(L):=\{v \in L\mid B_{Q_0}(v,w)=0\mbox{ for all } w\in L\}$ of the lattice $(L=\mathbb{Z}^d,Q_0)$ is clearly a sublattice of $L$. Further, it makes sense to define the quotient lattice $(\mathbb{Z}^d/\mathrm{Rad}(L),Q_1)$ by $Q_1(v\mod \mathrm{Rad}(L)):=Q_0(v)$. By the classification of finitely generated abelian groups, we have that $\mathbb{Z}^d/\mathrm{Rad}(L)$ is isomorphic to $\mathbb{Z}^m$ for some $m\le d$. 
	
	The quadratic lattice $(\Z^m, Q)\simeq (\mathbb{Z}^d/\mathrm{Rad}(L),Q_1)$ is then positive definite (by the Cauchy-Schwarz inequality, if $Q(v)=0$, then $B(v,w)=0$ for all $w$), i.e., $Q$ is a $\Z$-form of rank $m\le d$ which represents $Q_0$ by definition.	
\end{proof}	

As the form $Q$ from the lemma represents $Q_0$ (over $\Z$), it clearly represents all the elements that are represented by $Q_0$, not only over $\Z$, but also over $\co\supset \Z$.
In particular, $Q$ represents $\alpha$, which finishes the proof of Proposition \ref{lemma:3}.
\end{proof}

This proposition in particular gives an algorithm for deciding whether a given element $\alpha\in\co$ is represented by some $\Z$-form. Namely, it allows us to restrict our attention only to $\Z$-forms $Q$ of rank at most $d$.
Now we can use the following theorem of Conway-Sloane.

\begin{theorem}\label{thm:cs}\cite[Theorem 1]{CS1}
	For each $r\in\Z^+$ there is $k=k(r)\in\Z^+$ with the following property: If $Q$ is a classical $\Z$-form of rank $r$, then the form $kQ$ is the sum of squares of linear forms, in other words, it is represented by the quadratic form $x_1^2+\dots+x_s^2$ for some $s$. For $r\leq 5$ we have $k(r)=1$.	
\end{theorem}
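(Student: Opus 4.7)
My plan splits the statement into two tasks: (i) establishing the existence of the multiplier $k(r)$ for arbitrary $r$, and (ii) establishing the much sharper $k(r)=1$ in the range $r\le 5$.

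For task (i), I would start from the $\Q$-analogue and then try to clear denominators uniformly. First, I diagonalize $Q$ over $\Q$ via symmetric Gaussian elimination (or Sylvester's procedure) to obtain $Q\sim c_1 L_1^2+\cdots+c_r L_r^2$ where $c_i\in\Q^+$ and the $L_i$ are linear forms with $\Q$-coefficients. Since Lagrange's four-square theorem passes to $\Q$ (every positive rational is a sum of four rational squares: clear the denominator, apply $\Z$-Lagrange, and rescale), each $c_i$ is a sum of rational squares, so $Q=\sum_j \ell_j^2$ with $\ell_j$ rational linear forms. Multiplying by the square $N_Q^2$ of a common denominator gives $N_Q^2\cdot Q=\sum_j(N_Q\ell_j)^2$, a sum of integer-coefficient squares. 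This already furnishes some $k_Q$ depending on $Q$.

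The hard step is to choose a single $k=k(r)$ that works for \emph{every} classical $\Z$-form of rank $r$. My plan here is a genus/local-global argument: the representability of $kQ$ by $I_s=x_1^2+\cdots+x_s^2$ for large enough $s=s(r)$ is controlled by local Hasse--Witt invariants at finitely many primes, and for classical forms of rank $r$ the collection of possible local types at each prime is finite and depends only on $r$. Thus by picking $k$ divisible by an appropriate fixed product of small primes (enough to neutralize the local obstructions for rank $r$), one places $kQ$ in the genus of $I_s$. One then invokes either the known one-class structure of the genus of $I_s$ for $s$ sufficiently large, or a strong approximation/Kneser-neighbor argument, to upgrade local to global representability and conclude that $kQ$ is an integer sum of squares of linear forms.

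For task (ii), when $r\le 5$, I would invoke the classical theorem of Mordell on low-rank positive definite unimodular lattices: the only classical positive definite unimodular $\Z$-lattice of rank $\le 7$ is $I_n$. Combined with Minkowski reduction of $Q$ and the standard trick of embedding a rank-$r$ classical positive definite lattice into a unimodular overlattice of rank at most $r+2\le 7$, this forces the lattice to sit inside $I_n$ for some $n$, which is precisely the assertion that $Q$ itself is a sum of squares of $\Z$-linear forms, i.e.\ $k(r)=1$.

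I expect the principal obstacle to be the uniformity in $Q$ for $k(r)$ in task (i): the denominators in the naive rational Gram--Schmidt can be arbitrarily large, so one cannot get a uniform $k$ by brute force and must instead repackage everything as a finite list of local obstructions. Handling the prime $2$ carefully (where classical-but-not-even behavior of $Q$ matters) will likely be the most delicate point.
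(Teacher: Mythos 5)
The paper offers no proof of this statement at all---it is imported directly from Conway--Sloane \cite{CS1}---so the only question is whether your argument stands on its own, and it does not. The decisive gap is the uniformity/local-to-global step in task (i). First, the premise is off: integral representability of $kQ$ by $I_s=x_1^2+\dots+x_s^2$ is not governed by Hasse--Witt invariants (those control representability over $\Q_p$ only), and the possible local Jordan types of a classical form of rank $r$ do not form a finite list depending only on $r$: $\det Q$ can be divisible by arbitrary primes to arbitrary powers, and a fixed $k$ supported on small primes does nothing at the primes dividing $\det Q$. Second, and more fundamentally, the upgrade you invoke does not exist. The genus of $I_s$ has one class only for $s\le 8$, and representation of lattices of rank $\ge 2$ by sums of squares is genuinely not a local--global matter, no matter how many variables you allow: $E_8\perp I_{s-8}$ lies in the genus of $I_s$ for every $s\ge 9$, so $E_8$ is represented by $I_s$ over $\R$ and over every $\Z_p$, yet $E_8$ embeds isometrically into no $\Z^N$, because a unimodular sublattice splits off as an orthogonal summand and the indecomposable orthogonal constituents of $\Z^N$ are all $\Z$. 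This example is precisely why a factor $k(r)>1$ is unavoidable for larger $r$, and why the genuine local--global results for representations of forms by forms (e.g.\ \cite{HKK}, or Icaza's theorem \cite{Ic} quoted in the proof of Corollary \ref{cor:pythagoras}) carry extra hypotheses such as a sufficiently large minimum of the represented lattice, or the standing assumption that the form is represented by \emph{some} $I_N$ at all. Your sketch supplies none of this, so the existence of a uniform $k(r)$ is not established.

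Task (ii) also fails as written. There is no ``standard trick'' embedding every classical positive definite lattice of rank $r$ into a positive definite unimodular lattice of rank at most $r+2$: already for $r=1$ the lattice $\langle 7\rangle$ admits no such embedding, since the only positive definite unimodular lattices of rank $\le 3$ are $I_1,I_2,I_3$ and $7$ is not a sum of three squares. The correct quantitative statement (Mordell \cite{Mo} for $r=2$, Ko \cite{Ko} for $r=3,4,5$) is representation by $I_{r+3}$, and for $r=5$ the target has rank $8$, exactly where $E_8$ appears---so even a repaired embedding claim cannot be combined with ``every unimodular positive definite lattice of rank $\le 7$ is $I_n$'' to conclude. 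Establishing $k(r)=1$ for $r\le 5$ requires the actual reduction-theoretic arguments of Mordell and Ko (or Conway--Sloane's proof), which your outline does not reproduce.
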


Returning to the situation when a given element $\alpha$ is represented by a $\Z$-form $Q$ of rank at most $d$, we see that $2Q$ is classical and so
$2kQ$ is represented by the sum of squares (where $k=k(d)$). Thus we need only to check whether $2k\alpha$ is the sum of squares in $\co$ (which we can do, e.g., using Lemma~\ref{lemma:norm_bound}a)) and then whether these squares are of the correct shape corresponding to the decomposition of $2kQ$.

\

For a ring $R$, let $\mathrm{Sq}(R)$ denote the set of elements that are sums of squares in $R$ and $\mathrm{Sq}_m(R)$ denote the set of elements that are sums of $m$ squares in $R$. Then 
\[\mathrm{P}(R):=\min\{m:\mathrm{Sq}_m(R)=\mathrm{Sq}(R)\}\] 
is the \defined{Pythagoras number} of $R$ (if no such $m$ exists, then $\py(R):=\infty$). For a totally real number field $K$, we have that $\mathrm{P}(K)\le 4$ \cite{Si2,Ho}.  For orders $\co$ in $K$, $\mathrm{P}(\co)$ is finite, but can grow arbitrary large \cite{Sc2}. In this work, Scharlau also asked whether the Pythagoras number of orders is bounded in terms of the degree of the field $K$. 
Let us now show that this is indeed so.

\begin{cor}\label{cor:pythagoras}
	Let $\co$ be an order in a totally real number field of degree $d$. Then  $\mathrm{P}(\co)\le f(d)$, where $f$ is some function which depends only on $d$.
	If  $d=2,3,4$, or $5$, then $\mathrm{P}(\co)\le d+3$.
\end{cor}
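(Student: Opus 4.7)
The plan is to combine Proposition~\ref{lemma:3} with Theorem~\ref{thm:cs}. Fix $\alpha \in \mathrm{Sq}(\co)$; the aim is to bound the number of squares in some representation of $\alpha$ by a quantity depending only on $d$.

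First I would mimic the proof of Proposition~\ref{lemma:3}. Writing $\alpha = \beta_1^2 + \dots + \beta_N^2$ with $\beta_i \in \co$ and expanding $\beta_i = \sum_{j=1}^d c_{ij}\omega_j$ with $c_{ij} \in \Z$, set the column vectors $v^{(j)} := (c_{1j}, \dots, c_{Nj})^T \in \Z^N$. The $\Z$-sublattice $W \subseteq \Z^N$ spanned by $v^{(1)}, \dots, v^{(d)}$ (with the inner product inherited from $\Z^N$) is automatically classical, positive definite, and of rank $r \le d$. Choosing a $\Z$-basis $u^{(1)}, \dots, u^{(r)}$ of $W$ with $v^{(j)} = \sum_l c'_{jl} u^{(l)}$, and setting $\xi_l := \sum_j c'_{jl} \omega_j \in \co$, the computation in Proposition~\ref{lemma:3} yields $\alpha = Q(\xi_1, \dots, \xi_r)$, where $Q$ is the classical $\Z$-form of rank $r \le d$ whose Gram matrix equals that of $W$.

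Next I apply Theorem~\ref{thm:cs} to $Q$. When $d \le 5$ we have $r \le 5$, so $k(r) = 1$ and $Q$ itself is represented by $x_1^2 + \dots + x_s^2$ for some $s$; the classical refinements of this fact (Lagrange for $r=1$, Mordell for $r = 2$, Ko for $r = 3, 4, 5$) permit the sharper estimate $s \le r+3$. Hence $M_Q = Y^T Y$ for some $Y \in \Z^{(r+3) \times r}$, and substituting $\xi$ gives
\[
\alpha \;=\; Q(\xi) \;=\; (Y\xi)^T (Y\xi) \;=\; \sum_{l=1}^{r+3}(Y\xi)_l^2 \;\in\; \mathrm{Sq}_{r+3}(\co) \;\subseteq\; \mathrm{Sq}_{d+3}(\co),
\]
which proves $\mathrm{P}(\co) \le d+3$ in the small-degree case.

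For general $d$, applying Theorem~\ref{thm:cs} only gives $k\alpha \in \mathrm{Sq}_{s}(\co)$ for $k = k(r)$ and $s = s(r)$ depending only on $d$, which does not directly control the number of squares needed for $\alpha$ itself. To close this gap I would combine the above with the Siegel--Hilbert bound $\mathrm{P}(K) \le 4$: for some $\gamma \in \co$ one has $\gamma^2 \alpha \in \mathrm{Sq}_4(\co)$, and comparing this with $k\alpha \in \mathrm{Sq}_s(\co)$ via a lattice-theoretic / local--global argument should yield $\mathrm{P}(\co) \le f(d)$ for some $f$ depending only on $d$. I expect this last passage from a short representation of $k\alpha$ back to one of $\alpha$ to be the main technical difficulty; the resulting $f(d)$ would typically be non-explicit, which is consistent with the statement of the corollary.
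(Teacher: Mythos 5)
Your argument for $d\le 5$ is correct and is essentially the paper's proof: you reduce $\alpha$ to a value of a classical positive definite $\Z$-form $Q$ of rank $r\le d$ which is itself a sum of squares of linear forms, and then invoke the Mordell--Ko bound $r+3$; in fact your construction already gives the Gram matrix of $Q$ as $U^TU$ with $U\in\Z^{N\times r}$, so $Q$ is manifestly a sum of squares of linear forms and you do not even need Theorem~\ref{thm:cs} at this point, only \cite{Mo,Ko}.

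The general-$d$ half, however, is not proved, and this is a genuine gap rather than a presentational one. The ingredient you are missing is the uniform bound for the ``quadratic Waring problem'': Icaza \cite[Proposition 3]{Ic} (cf.\ \cite[Theorem 1.3]{BI}) shows there is a function $f(d)$ such that every quadratic form of rank $d$ that is represented by the sum-of-squares form is represented by $x_1^2+\dots+x_{f(d)}^2$. The paper applies this directly to the rank-$\le d$ form produced by Proposition~\ref{lemma:3} (your $Q$ would serve equally well, precisely because its Gram matrix is $U^TU$), and immediately gets $\alpha\in\mathrm{Sq}_{f(d)}(\co)$, with no scaling factor $k$ ever appearing. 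Your proposed substitute breaks down in two places: first, Theorem~\ref{thm:cs} as stated gives no bound $s=s(r)$ on the number of squares -- such a bound is essentially the Icaza-type result you have not invoked; second, and more seriously, even granting $k\alpha\in\mathrm{Sq}_{s(d)}(\co)$ and $\gamma^2\alpha\in\mathrm{Sq}_4(\co)$ (with $\gamma$ depending on $\alpha$), you give no argument converting these into a bound, depending only on $d$, on the number of squares needed for $\alpha$ itself; dividing out $k$ or $\gamma^2$ is not a routine step, and you yourself flag this passage as the main unresolved difficulty. As it stands, the first assertion of the corollary is therefore not established by your proposal.
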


\begin{proof}
	Assume that $\alpha\in\mathrm{Sq}(\co)$, i.e., $\alpha$ is represented by the $\Z$-form $x_1^2+\dots+x_{r}^2$ over $\co$ for some $r$.
	From Proposition \ref{lemma:3} it follows that $\alpha$ is represented by some $d$-ary semidefinite form $Q_0$. Moreover, from its construction \eqref{eq:d-ary} in the proof of Proposition \ref{lemma:3}, we see that this form $Q_0$ is represented by the original form $x_1^2+\dots+x_{r}^2$.
	
	Icaza \cite[Proposition 3]{Ic} (cf. also \cite[Theorem 1.3]{BI}) showed that \emph{there exists a function $f(d)$, which depends only on $d$, such that any quadratic form of rank $d$ that is represented by the sum of squares quadratic form is represented by the sum of $f(d)$ squares}. Therefore, $Q_0$ is represented by $f(d)$ squares $y_1^2+\cdots+y_{f(d)}^2$.
	Since $Q_0$ represents $\alpha$ over $\co$, we see that $\alpha$ is the sum of $f(d)$ squares of elements of $\co$. In other words, 
	$\mathrm{P}(\co)\le f(d)$. 
	
	Finally, when $2\leq d\leq 5$, then we are dealing with $Q_0$ of rank $d\leq 5$. For such forms, Ko and Mordell \cite{Ko,Mo} proved that they are represented by at most $d+3$ squares. Therefore we can take $f(d)=d+3$ for $2\leq d\leq 5$.
\end{proof}

The bound for real quadratic number fields $K=\Q(\sqrt n)$ is sharp \cite{Pe}. In fact, one can show that  $\mathrm{P}(\co_K)=3$ for $n=2,3,5$ \cite{Co,Sc1} and determine all $n$ for which $\mathrm{P}(\co_K)=4$ (as in \cite{CP}).

Let us now consider $\Z$-forms again. If we restrict to classical ones, we get the following result:

\begin{cor}\label{cor:univ class}
	Let $K\not=\Q(\sqrt{5})$ be a totally real number field of degree $d>1$ and $\co_K$ the ring of integers in $K$. Then there does not exist a classical $\Z$-form of rank $3, 4,$ or $5$ that is universal over $\co_K$. 
\end{cor}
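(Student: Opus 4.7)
The plan is to reduce the existence of a classical universal $\Z$-form of small rank over $\co_K$ to the universality of a sum of squares over $\co_K$, and then invoke Siegel's Theorem~\ref{thm:siegel} to get a contradiction. The crucial input here is the Conway--Sloane result (Theorem~\ref{thm:cs}) in the strong form $k(r)=1$ for $r\le 5$: every classical $\Z$-form of rank at most $5$ is itself represented (over $\Z$) by a sum of squares $x_1^2+\dots+x_s^2$ for some $s$. So the whole argument is essentially a routine composition.

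Concretely, suppose for contradiction that $Q$ is a classical $\Z$-form of rank $r\in\{3,4,5\}$ which is universal over $\co_K$. By Theorem~\ref{thm:cs}, there is an integer matrix $X\in\Z^{s\times r}$ with $M_Q=X^TX$. For any $\alpha\in\co_K^+$, universality supplies $v\in\co_K^r$ with $Q(v)=\alpha$; writing $w:=Xv\in\co_K^s$ we then have
\[
\alpha \;=\; v^T M_Q v \;=\; v^T X^T X v \;=\; w^T w \;=\; w_1^2+\dots+w_s^2.
\]
Thus every totally positive integer in $\co_K$ is a sum of $s$ squares of elements of $\co_K$, i.e., the form $x_1^2+\dots+x_s^2$ is universal over $\co_K$.

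Since $d>1$ we have $K\ne\Q$, and by assumption $K\ne\Q(\sqrt 5)$, so Theorem~\ref{thm:siegel} applies and tells us that no sum of squares can be universal over $\co_K$. This contradicts what we just derived and completes the proof. There is no real obstacle here beyond making sure the matrix identity $M_Q=X^TX$ from the $\Z$-representation really can be reused with coefficient vectors in $\co_K$; but since $X$ has rational-integer entries, the computation $Xv\in\co_K^s$ is immediate.
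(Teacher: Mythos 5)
Your proof is correct and follows essentially the same route as the paper: apply the Conway--Sloane result (Theorem~\ref{thm:cs} with $k(r)=1$ for $r\le 5$) to write $Q$ as a sum of squares of integral linear forms, conclude that $x_1^2+\dots+x_s^2$ is universal over $\co_K$, and contradict Siegel's Theorem~\ref{thm:siegel}. The only difference is that you spell out the matrix identity $M_Q=X^TX$ and its transfer to vectors in $\co_K^r$, which the paper leaves implicit.
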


\begin{proof}
	For contradiction assume that $Q$ is a classical $\Z$-form of rank strictly less than $6$ that is universal over $\co_K$. By Theorem \ref{thm:cs} of Conway-Sloane, $Q$ is represented by the sum of $s$ squares $x_1^2+\dots+x_s^2$ for some $s$. Since $Q$ is universal over $\co_K$, it follows that the sum of squares $x_1^2+\dots+x_s^2$ is universal as well.
	But this is impossible if $K\not=\Q(\sqrt{5})$ by Siegel's Theorem \ref{thm:siegel}.
\end{proof}

The previous corollary answers a (very) special case of Kitaoka's conjecture \cite{Km} that there exist only finitely many totally real number fields which admit a universal ternary quadratic form.
Note that the use of Theorem \ref{thm:siegel} was the only place in the proof of Corollary \ref{cor:univ class} where we used the assumption that $\co_K$ is the maximal order. This probably can be avoided by generalizing Siegel's theorem to general orders (most likely using essentially the same proof).

\section{Forms of $E$-type}\label{s:4}

Given two (positive definite) $\Z$-lattices $(L_1,{Q_1})$ and $(L_2,Q_2)$, we define their tensor product over $\Z$ as $(L_1\otimes L_2,Q_1\otimes Q_2)$, so that 
\[(Q_1\otimes Q_2)(v\otimes w)=Q_1(v)Q_2(w)\mathrm{\ and\ }B_{Q_1\otimes Q_2}(v\otimes v',w\otimes w')=B_{Q_1}(v,w)B_{Q_2}(v',w')\]
for all $v,v'\in L_1$ and all $w,w'\in L_2$. Given bases $\{v^{(1)},\ldots,v^{(b)}\}$ of $L_1$ and $\{w^{(1)},\ldots,w^{(c)}\}$ of $L_2$, then $\{v^{(i)}\otimes w^{(j)}\}$ is the canonical basis of the tensor product $L_1\otimes L_2$. The Gram matrix $M_{Q_1\otimes Q_2}$ of $L_1\otimes L_2$ with respect to this canonical basis equals $M_{Q_1}\otimes M_{Q_2}$, i.e., the Kronecker product of the Gram matrices of $Q_1$ and $Q_2$ (see \cite[Chapter 7]{K1} for more details on tensor products of lattices).

In general $(L_1\otimes L_2,Q_1\otimes Q_2)$ is not a $\Z$-lattice, as the quadratic form $Q_1\otimes Q_2$ need not be integer valued: for example, the tensor product of the lattice corresponding to the non-classical quadratic form $x^2+xy+y^2$ with itself will have quadratic form 
$$X^2+Y^2+Z^2+W^2+XY+XZ+YZ+YW+\frac12 (XW+YZ).$$
However, this happens if and only if we tensor two non-classical forms; as long as one of the forms is classical, the tensor product will be a $\Z$-lattice. This will always be the case in our paper.

\

For a $\Z$-lattice $(L,Q)$, let $\min(L):=\min_{0\not=v\in L}Q(v)$ be the minimum of $L$ and
\[\mathcal{M}(L):=\{v\in L:Q(v)=\min(L) \}\]
be the set of minimal vectors of $L$.
For two $\Z$-lattices $(L_1,Q_1)$ and $(L_2,Q_2)$ (one of which is classical) we clearly have
\[\min(L_1\otimes L_2)\le\min(L_1)\min(L_2).\]
There are examples of this inequality being strict (see \cite[page~47]{MH}), but there are important classes of lattices for which one has equality:

\begin{definition}
	We say that a $\Z$-lattice $L$ is of \defined{$E$-type} if 
	$\mathcal{M}(L\otimes M)\subset\{ v\otimes w :v\in \mathcal{M}(L), w\in \mathcal{M}(M)\}$ for every classical $\Z$-lattice $M$.
\end{definition}

Note that although lattices of $E$-type are usually defined only for classical $\Z$-lattices in the literature (e.g., \cite{K1}), we are extending the definition also to non-classical lattices. Nevertheless, all the results concerning lattices of $E$-type, such as Kitaoka's Theorem \ref{rmk:E-type} below, still hold since a non-classical lattice $L$ is of $E$-type if and only if the classical lattice $2L$ is of $E$-type.

Given two $\Z$-lattices $L_1$ and $L_2$, then of course not all elements of $L_1\otimes L_2$ are \defined{split}, i.e., of the form $v_1\otimes v_2$, $v_i\in L_i$. However, if either of $L_i$ is of $E$-type, then all the minimal vectors of $L_1\otimes L_2$ are split \cite[Lemma 7.1.1]{K1}.

Although certainly not every lattice is of $E$-type, this is true in several important cases, as the following theorem of Kitaoka shows.

\begin{theorem}\label{rmk:E-type}\cite[Theorems 7.1.1, 7.1.2, 7.1.3]{K1}
	Let $Q$ be a $\Z$-form of rank $r$. Then $Q$ is of $E$-type if at least one of the following conditions holds:
	\begin{itemize}
		\item $r\leq 43$,
		\item $\min(Q)\le 3$,
		\item $Q(x_1,\dots,x_r)=\Tr_{K/\Q}((\sum x_i\omega_i)^2)$, where $K$ is an abelian number field of degree $r$ with integral basis $\omega_1,\dots,\omega_r$.
	\end{itemize} 
\end{theorem}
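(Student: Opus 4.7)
The statement packages three independent sufficient conditions, and I would treat each as its own lemma. A reduction common to all three: $L$ is of $E$-type if and only if $2L$ is, since scaling by a positive integer commutes with tensor products and preserves which vectors of $L\otimes M$ are split. This lets me assume $L$ is itself classical in every case.

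For $\min(Q)\le 3$: given a classical $M$ and a minimal vector $u\in\mathcal{M}(L\otimes M)$, write $u=\sum_{i=1}^{k}v_i\otimes w_i$ with $k$ chosen minimal, so the Gram matrices $G=(B_{Q_1}(v_i,v_j))$ and $N=(B_{Q_2}(w_i,w_j))$ are both positive definite with $(Q_1\otimes Q_2)(u)=\mathrm{tr}(GN)\le\min(L)\min(M)$. When $\min(L)\le 3$, Cauchy--Schwarz together with integrality pins $|B_{Q_1}(v_i,v_j)|$ into a very short list of possible values, so $G$ is forced to lie among a finite family of Gram matrices (essentially those arising from root-system vectors of types $A$ and $D$). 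Bounding $\mathrm{tr}(GN)$ from below by the smallest eigenvalue of $G$ times $\mathrm{tr}(N)\ge k\min(M)$ then forces $k=1$, i.e., $u$ is split.

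For the trace-form case over an abelian $K$: the Galois group $\mathrm{Gal}(K/\bQ)$ acts on $\co_K$ by isometries of the trace form, giving a rich automorphism group acting transitively on the embeddings $\sigma_i$. By Kronecker--Weber $K\subseteq\bQ(\zeta_n)$, and one can use the character decomposition of the group algebra to realize $(\co_K,T)$ as a sum of one-dimensional pieces indexed by Dirichlet characters. Minimal vectors of $T$ then turn out to be (up to Galois conjugation) products of roots of unity with a fixed short algebraic integer, and a direct analysis of their behavior under tensoring against an arbitrary classical $M$—essentially running the estimate of the previous paragraph on this explicit list—gives splitness.

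The main obstacle is the rank bound $r\le 43$. I would argue by contradiction: if $L$ of rank $r$ fails to be of $E$-type, witnessed by some classical $M$ and a non-split $u=\sum_{i=1}^{k}v_i\otimes w_i\in\mathcal{M}(L\otimes M)$ with $k\ge 2$, then the sublattice $\langle v_1,\dots,v_k\rangle\subset L$ together with its Gram pairing against $N$ must achieve equality in a concentration/Hermite-type inequality, and similarly for $\langle w_1,\dots,w_k\rangle\subset M$. Combining this extremality with the classification of lattices of small rank carrying extremal minima (root lattices, laminated lattices, the Barnes--Wall family, and related constructions up through rank $43$) one rules out the candidate configurations family by family, forcing $r\ge 44$. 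This case analysis is exactly where the numerical bound $43$ enters and is the most delicate part of the argument, being sensitive to the current state of lattice classification in medium rank.
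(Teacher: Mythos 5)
There is a genuine gap here, and it sits exactly where you yourself locate the difficulty. First, note that the paper does not prove this statement at all: it is quoted verbatim from Kitaoka's book (Theorems 7.1.1--7.1.3 of \cite{K1}), so the only sensible benchmark is Kitaoka's actual argument. Your treatment of the main case $r\le 43$ is not a viable route to that result. The bound $43$ in Kitaoka's theorem does not come from a classification of lattices of rank up to $43$ carrying extremal minima --- no such classification exists (already in rank $32$ the number of relevant lattices is astronomically large), and none is needed. Kitaoka's proof is analytic: one shows that a non-split minimal vector $u=\sum_{i=1}^k v_i\otimes w_i$ with $k\ge 2$ forces the existence of sublattices of $L$ (and of $M$) whose minima are so large relative to their rank and determinant that they would violate known upper bounds for the Hermite constants $\gamma_n$; the numerical threshold $43$ is precisely where this inequality stops being contradictory. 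Replacing this estimate by ``rule out the candidate configurations family by family'' is not a proof strategy that can be carried out.

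The other two cases are also not established by your sketch. In the $\min(Q)\le 3$ case, the step ``bound $\mathrm{tr}(GN)$ from below by the smallest eigenvalue of $G$ times $\mathrm{tr}(N)$'' does not force $k=1$: the smallest eigenvalue of an integral positive definite Gram matrix with bounded diagonal can be arbitrarily close to $0$ (nearly parallel vectors), so this lower bound is too weak, and Kitaoka's actual argument for small minima is considerably more delicate. In the trace-form case, the character decomposition of $K\subset\Q(\zeta_n)$ only diagonalizes the form over $\CC$ (or after extending scalars), not over $\Z$, so it does not realize $(\co_K,\Tr(x^2))$ as an orthogonal sum of rank-one $\Z$-lattices; and the asserted description of the minimal vectors as root-of-unity multiples of a fixed short integer is unproved and is in fact the crux of the matter. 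As it stands, each of the three bullet points would have to be re-proved following Kitaoka's original arguments; the proposal does not supply a substitute for any of them.
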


We will only use the first criterion in this paper; it is an open question what is the smallest rank of a  $\Z$-form not of $E$-type. 

\

From now on, we will work only with the maximal order $\co_K$ in a totally real number field $K$ of degree $d$, although as we discussed in the Introduction, probably many of our results generalize to the case of general orders.

For $\delta\in \co_K^{\vee,+}$, we can consider the ``twisted trace form", i.e., the quadratic form $T_{\delta}(x)=\Tr(\delta x^2)$ for $x\in\co_K$. 
Note that $T_{\delta}$ is not a quadratic form over $\co_K$ (as $T_\delta(\alpha x)\neq \alpha^2T_\delta(x)$ for general $\alpha\in\co_K$), but
$(\co_K,T_\delta)$ is a quadratic $\Z$-lattice of rank $d$.
Fixing an integral basis $\omega_1,\dots,\omega_d$ for $\co_K$, we identify $\co_K$ with $\Z^d$. Then we can
denote $t_\delta$ the $\Z$-form of rank $d$ such that 
$$t_{\delta}(x_1, \dots, x_d):=\Tr\left(\delta \left(\sum x_i\omega_i\right)^2\right),$$
 i.e., $(\co_K, T_{\delta})=(\Z^d, t_{\delta})$ under the identification of $\co_K$ with $\Z^d$. 

Since $\delta$ lies in the codifferent, the form $T_{\delta}$ is $\Z$-valued. Because $\delta$ is totally positive, all the values of $T_{\delta}$ are positive (except for $T_\delta(0)=0$), 
and so $t_{\delta}$ is positive definite. 
Moreover, we see that $B_{T_\delta}(\beta,\beta')=\Tr(\delta\beta\beta')$ for all $\beta,\beta' \in \co_K$, and so the Gram matrix of the form $t_\delta$ is $(\Tr(\delta\omega_i\omega_j))_{ij}$, hence all its entries are (rational) integers.
In other words, we have verified that $t_{\delta}$ is a classical $\Z$-form, and so it makes sense to consider the tensor product $t_\delta\otimes Q$ with any $\Z$-form $Q$.
Finally, although $t_{\delta}$ of course depends on the choice of the integral basis, we will not need to worry about this, as the basis will be considered fixed throughout the paper.
We now have the following classical result on tensor products.

\begin{lemma}\label{lemma:tensor} 
	For a $\Z$-form $Q$ of rank $r$ and $\delta\in\co_K^{\vee,+}$, we have that 
	\[(\co_K^r,\Tr(\delta Q))=(\co_K\otimes\Z^r,T_\delta\otimes Q)=(\Z^d\otimes\Z^r,t_\delta\otimes Q).\]
\end{lemma}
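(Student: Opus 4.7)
The plan is to prove both equalities by unwinding definitions and verifying that the underlying $\Z$-modules and the quadratic (equivalently, bilinear) forms agree on a natural basis. I fix the integral basis $\omega_1,\dots,\omega_d$ of $\co_K$ and the standard basis $e^{(1)},\dots,e^{(r)}$ of $\Z^r$; then $\{\omega_i\otimes e^{(k)}\}_{i,k}$ is a $\Z$-basis of $\co_K\otimes_\Z\Z^r$, and the $\Z$-linear map $\omega_i\otimes e^{(k)}\mapsto \omega_i e^{(k)}\in\co_K^r$ (i.e.\ the vector with $\omega_i$ in the $k$-th coordinate) is a $\Z$-module isomorphism $\co_K\otimes_\Z\Z^r\xrightarrow{\sim}\co_K^r$. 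The identification $(\co_K,T_\delta)=(\Z^d,t_\delta)$ via the chosen basis then automatically gives the second equality $(\co_K\otimes\Z^r,T_\delta\otimes Q)=(\Z^d\otimes\Z^r,t_\delta\otimes Q)$ at the level of both the $\Z$-module and the quadratic form, so the content of the lemma is really the first equality.

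To verify the first equality, I would compute the two bilinear forms on pure tensors and then invoke bilinearity. For $\beta,\beta'\in\co_K$ and $v,w\in\Z^r$, the bilinear form of $Q$ (now thought of as a form over $\co_K$) satisfies $B_Q(\beta v,\beta' w)=\beta\beta' B_Q(v,w)$, since $B_Q$ is bilinear over $\co_K$ and $B_Q(v,w)\in\Z$. Therefore
\[
B_{\Tr(\delta Q)}\bigl(\beta v,\beta' w\bigr)=\Tr\bigl(\delta\cdot B_Q(\beta v,\beta' w)\bigr)=\Tr(\delta\beta\beta')\,B_Q(v,w)=B_{T_\delta}(\beta,\beta')\,B_Q(v,w),
\]
and by the definition of the tensor product bilinear form this equals $B_{T_\delta\otimes Q}(\beta\otimes v,\beta'\otimes w)$. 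Setting $\beta=\beta'$ and $v=w$ yields $\Tr(\delta Q)(\beta v)=T_\delta(\beta)Q(v)=(T_\delta\otimes Q)(\beta\otimes v)$, which is consistent. Since the two bilinear forms on the rank-$dr$ lattice $\co_K^r$ agree on every pair of basis elements $\omega_i e^{(k)},\omega_j e^{(\ell)}$, they agree identically, and hence so do the associated quadratic forms.

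There is no real obstacle: the lemma is essentially a bookkeeping statement confirming that composing $Q$ with the $\delta$-twisted trace is the same as tensoring the lattice $(\co_K,T_\delta)$ with $Q$ over $\Z$. The only point that deserves care is to remember that $Q$ is $\Z$-valued on $\Z^r$ (so it pulls out of the trace), and that the tensor product is taken over $\Z$, not over $\co_K$, so that the underlying rank is $dr$ and matches $\co_K^r$ as a $\Z$-module via the chosen integral basis.
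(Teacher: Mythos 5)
Your proposal is correct and follows essentially the same route as the paper: both reduce to checking that the bilinear forms of $\Tr(\delta Q)$ and $T_\delta\otimes Q$ agree on split vectors (your pairs $\beta v,\beta' w$), pulling the $\Z$-valued quantity $B_Q(v,w)$ out of the trace, with the second identification being definitional. Your explicit remark that the basis vectors $\omega_i e^{(k)}$ are themselves split, so agreement on split vectors suffices by bilinearity, is a welcome (if minor) clarification of the same argument.
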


Hence we will freely switch between these three $\Z$-lattices (and the corresponding quadratic forms) in the following.
In particular, note that they have the same minimum.

\begin{proof}
	Given that $\co_K^r$, $\co_K\otimes\Z^r$, and $\Z^d\otimes\Z^r$ are isomorphic as $\Z$-modules and that $T_\delta$ and $t_\delta$ are equivalent under this isomorphism by definition, it suffices to show that $\Tr(\delta Q)$ is equivalent to $T_\delta\otimes Q$. It suffices to show that the corresponding bilinear forms are equal on all split vectors.
	
	Let $\beta,\beta' \in \co_K$, $w,w'\in \Z^r$, and $B_Q(x,y)=\sum_{i, j}a_{ij}x_iy_j$. Then
	\begin{align*}
	B_{T_\delta\otimes Q}(\beta\otimes w,\beta'\otimes w')&=B_{T_\delta}(\beta,\beta')B_Q(w,w')
	=\Tr(\delta \beta\beta')\sum_{i, j}a_{ij}w_iw_j'\\
	&=\Tr\left(\delta \sum_{i, j}\beta\beta' a_{ij}w_iw_j'\right)
	=\Tr(\delta B_Q(\beta w,\beta'w'))\\
	&=B_{\Tr(\delta Q)}(\beta w,\beta'w').\qedhere
	\end{align*}
\end{proof}

The most important case for us will be when $t_{\delta}$ is of $E$-type, which we will assume from now; let us summarize all our assumptions for the rest of the paper:
\begin{itemize}
	\item $K$ is a totally real number field of degree $d$ over $\Q$,
	\item $\co_K$ is the ring of integers in $K$,
	\item the quadratic form $t_{\delta}$ is of $E$-type for every $\delta\in \co_K^{\vee,+}$; this is true if $d\leq 43$ by Theorem \ref{rmk:E-type},
	\item $Q(x)$ is a $\Z$-form of rank $r$, i.e., a positive definite quadratic form with $\Z$-coefficients.	
\end{itemize}

Let us now prove a series of auxiliary results that restrict possible number fields $K$ over which there may exist a universal $\Z$-form.

\begin{prop}\label{prop:indecom}\label{cor:minforms} 
	Assume that an indecomposable element $\alpha\in \co_K^+$ is 
	represented by $Q$ over $\co_K$ and satisfies $\Tr(\delta\alpha)=\min(t_{\delta}\otimes Q)$ for some $\delta\in \co_K^{\vee,+}$. Then $\alpha$ is a square in $\co_K$ and $\min(Q)=1$.
\end{prop}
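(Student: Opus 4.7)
The plan is to use the $E$-type assumption on $t_\delta$ to split a minimal vector of the tensor product lattice, and then to exploit indecomposability of $\alpha$ to force $\min(Q)=1$.

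First I would translate the hypothesis into lattice language via Lemma \ref{lemma:tensor}. Since $Q(v)=\alpha$ for some $v\in\co_K^r$, viewing $v$ in $(\co_K^r,\Tr(\delta Q))$ we get that its norm equals $\Tr(\delta\alpha)$, which by hypothesis equals $\min(t_\delta\otimes Q)$. Under the identification $(\co_K^r,\Tr(\delta Q))=(\Z^d\otimes \Z^r,t_\delta\otimes Q)$, the vector $v$ is therefore a minimal vector of the tensor product lattice $t_\delta\otimes Q$.

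Next I would invoke the $E$-type assumption on $t_\delta$ together with the fact that $Q$ is classical up to the $2$-scaling (so that \cite[Lemma 7.1.1]{K1} or the definition of $E$-type applies). This forces every minimal vector of $t_\delta\otimes Q$ to be split, so $v=\beta\otimes w$ with $\beta\in\mathcal{M}(t_\delta)\subset\co_K$ and $w\in\mathcal{M}(Q)\subset\Z^r$. Transporting back to $\co_K^r$ via the isomorphism of Lemma \ref{lemma:tensor}, the split tensor $\beta\otimes w$ corresponds to $\beta w=(\beta w_1,\dots,\beta w_r)\in\co_K^r$. Since $Q$ has coefficients in $\Z$, one has $Q(\beta w)=\beta^2 Q(w)$, so
\[
\alpha=\beta^2\, Q(w)=\beta^2\cdot \min(Q).
\]

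Finally I would use indecomposability. Note $\beta\neq 0$ (it is a minimal vector), so $\beta^2\in\co_K^+$; and $m:=\min(Q)$ is a positive integer. If $m\geq 2$, then $\alpha=\beta^2+\beta^2+\cdots+\beta^2$ ($m$ summands) decomposes $\alpha$ as a sum of totally positive integers, contradicting indecomposability. Hence $m=1$, which gives simultaneously $\min(Q)=1$ and $\alpha=\beta^2$, as required.

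The only subtle point is checking that the $E$-type splitting actually applies in our setup, since the tensor product $t_\delta\otimes Q$ may involve a non-classical $Q$; this is handled by the remark right after the definition of $E$-type that a lattice is of $E$-type iff its double is, together with the fact that $t_\delta$ is classical so that the tensor product is an honest $\Z$-lattice. Everything else is a straightforward chase through the identifications.
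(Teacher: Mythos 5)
Your proposal is correct and follows essentially the same route as the paper: identify the representing vector as a minimal vector of $t_\delta\otimes Q$ via Lemma \ref{lemma:tensor}, split it using the $E$-type property of $t_\delta$ to get $\alpha=\beta^2 Q(w)$, and use indecomposability to force $Q(w)=1$. The only cosmetic difference is that you take $w\in\mathcal M(Q)$ so that $Q(w)=\min(Q)$ directly, while the paper deduces $Q(w)=1$ (hence $\min(Q)=1$) from the decomposition $\alpha=\beta^2+\beta^2(Q(w)-1)$; this is the same argument.
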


\begin{proof}
	From the assumption that $\Tr(\delta\alpha)=\min(t_{\delta}\otimes Q)$, we conclude that the element $v$ of $\co_K^r$ representing $\alpha\in\co_K$ is a minimal vector of the quadratic form $t_{\delta}\otimes Q$ (which we identify with $\Tr(\delta Q)$ by Lemma \ref{lemma:tensor}).	
	Since $t_\delta$ is of $E$-type, the minimal vector $v\in\co_K^r=\co_K\otimes\Z^r$ is split, that is, $v=\beta\otimes w$, where $\beta \in \co_K$ and $w\in \Z^r$. We then have
	\[
	\alpha=Q(v)=
	\beta^2Q(w).
	\]
	Given that $\alpha$ is indecomposable and $Q(w)\in\Z^+$, we conclude that $Q(w)=1$ (otherwise $\alpha=\beta^2+\beta^2(Q(w)-1)$) and that $\alpha$ is a square.  
\end{proof}

\begin{cor}\label{cor:units} 
	If $Q$ is universal over $\co_K$, then every totally positive unit is a square in $\co_K$. Hence there is a unit of every signature in $\co_K$.
\end{cor}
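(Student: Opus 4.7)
The plan is to apply Proposition~\ref{prop:indecom} to a totally positive unit $u\in\co_K^{\times,+}$. Three hypotheses are needed: that $u$ is indecomposable, that $u$ is represented by $Q$, and that there exists $\delta\in\co_K^{\vee,+}$ with $\Tr(\delta u)=\min(t_\delta\otimes Q)$. The first is immediate from Lemma~\ref{lemma:norm_bound}(b) since $\mathrm{N}(u)=1<2^d$, and the second from universality. Producing $\delta$ is the only real content.

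The trick will be to pick $\delta'\in\co_K^{\vee,+}$ that minimises $\Tr$ on $\co_K^{\vee,+}$ (such $\delta'$ exists because $\co_K^{\vee,+}$ is nonempty — a full-rank $\Z$-lattice in $K\otimes\R$ meets the open positive cone — and the trace is a positive integer-valued function there), and then set $\delta\eqdef u^{-1}\delta'$. Since $u^{-1}$ is a totally positive unit, one checks that $\delta\in\co_K^{\vee,+}$. Then $\Tr(\delta u)=\Tr(\delta')$; and for any $\gamma\in\co_K^+$, the element $\delta' u^{-1}\gamma$ lies in $\co_K^{\vee,+}$, so $\Tr(\delta\gamma)=\Tr(\delta' u^{-1}\gamma)\ge\Tr(\delta')$ by minimality. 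Using Lemma~\ref{lemma:tensor} together with universality of $Q$ (so that $\{Q(v):v\in\co_K^r\setminus\{0\}\}=\co_K^+$), we conclude
\[
\min(t_\delta\otimes Q)=\min_{\gamma\in\co_K^+}\Tr(\delta\gamma)=\Tr(\delta')=\Tr(\delta u).
\]
Proposition~\ref{prop:indecom} then yields $u=\beta^2$ for some $\beta\in\co_K$, which is the first assertion.

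For the final claim, from $u=\beta^2$ and $\mathrm{N}(u)=1$ we get $\mathrm{N}(\beta)=\pm1$, so $\beta\in\co_K^\times$; hence $\co_K^{\times,+}\subseteq(\co_K^\times)^2$, and the reverse inclusion is automatic, giving $\co_K^{\times,+}=(\co_K^\times)^2$. By Dirichlet's unit theorem $\co_K^\times\cong\{\pm1\}\times\Z^{d-1}$, so $[\co_K^\times:(\co_K^\times)^2]=2^d$. The signature homomorphism $\co_K^\times\to\{\pm1\}^d$ has kernel $\co_K^{\times,+}$, so its image has size $2^d$, i.e., every signature occurs.

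The main obstacle is the choice of $\delta$: one must turn the arbitrary unit $u$ into the vector achieving the minimum of a tensor lattice. The unit-twist $\delta=u^{-1}\delta'$ converts this into the more accessible problem of showing that $\gamma=1$ minimises $\Tr(\delta'\cdot-)$ on $\co_K^+$, which is immediate from the minimality of $\delta'$ in $\co_K^{\vee,+}$ because $\delta'\co_K^+\subseteq\co_K^{\vee,+}$.
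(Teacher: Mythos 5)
Your proof is correct and its core mechanism is the same as the paper's: show the unit is indecomposable via Lemma \ref{lemma:norm_bound}b), twist by the inverse unit so that the vector representing it becomes a minimal vector of $t_\delta\otimes Q$, and invoke Proposition \ref{prop:indecom}. The only substantive difference is how you certify minimality: you first choose $\delta'\in\co_K^{\vee,+}$ of minimal trace (its existence needing a small lattice/positivity argument) and set $\delta=u^{-1}\delta'$, whereas the paper simply takes $\delta=\varepsilon^{-1}\in\co_K\subset\co_K^{\vee,+}$ and notes that $\Tr(\varepsilon^{-1}Q(v))\ge d$ for all $v\neq 0$ by AM--GM, with equality at the vector representing $\varepsilon$; the paper's choice avoids the auxiliary minimizer and the extra step $\delta' u^{-1}\gamma\in\co_K^{\vee,+}$, though your version works equally well and shows the argument does not depend on the special value $\Tr(1)=d$. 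For the last claim you prove the equivalence ``all totally positive units are squares $\iff$ units of all signatures exist'' directly from Dirichlet's unit theorem via the index computation $[\co_K^\times:(\co_K^\times)^2]=2^d$, where the paper just cites Narkiewicz; your argument is a correct and self-contained substitute.
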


\begin{proof}
	By Lemma \ref{lemma:norm_bound}b) we know that totally positive units are indecomposable.
	Hence in order to be able to use Proposition \ref{prop:indecom}, it suffices to show that for a given totally positive unit $\varepsilon$ there exists $\delta\in \co_K^{\vee,+}$ such that $\Tr(\delta\varepsilon)=\min(t_{\delta}\otimes Q)$. 
	
	We are assuming that $Q$ is universal, and so let $w\in\co_K^r$ be such that $Q(w)=\varepsilon$. 
	Clearly $\varepsilon^{-1}\in\co_K^{\vee,+}$ and the minimum of $t_{\varepsilon^{-1}}$ is greater or equal than the minimal value of $\Tr$ on totally positive integers. This minimal value is $d=\Tr(1)$, as by the inequality between arithmetic and geometric means we have $\Tr(\alpha)\geq d\mathrm N(\alpha)^{1/d}\geq d$.
	 
	Our assumption that $t_{\varepsilon^{-1}}$ is of $E$-type then implies
	\begin{align*}
	\min(t_{\varepsilon^{-1}}\otimes Q)&=\min(t_{\varepsilon^{-1}})\min(Q)\ge d\min(Q)\ge d.
	\end{align*}
	On the other hand, 
	$$\Tr(\varepsilon^{-1} Q(w))=\Tr(1)=d,$$
	and so $\varepsilon$ satisfies the assumptions of Proposition \ref{prop:indecom} for $\delta=\varepsilon^{-1}$, hence it is a square.
	
	Finally, recall the well-known fact \cite[p. 111, Corollary 3]{N} that every totally positive unit is a square in $\co_K$ if and only if there is a unit of every signature in $\co_K$.
\end{proof}

\begin{lemma}\label{lemma:squares} Assume that $Q$ is universal over $\co_K$ and let $\alpha\in \co_K^+$. If there exists $\delta \in \co_K^{\vee,+}$ such that $\Tr(\delta\alpha)\le \Tr(\delta\beta)$ for all $\beta\in\co_K^+$, then $\alpha$ is a unit in $\co_K$. 
\end{lemma}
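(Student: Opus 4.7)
The plan is to mirror the proof of Corollary \ref{cor:units}: first force $\alpha$ into the hypotheses of Proposition \ref{prop:indecom} to obtain $\alpha=\beta^2$, then use the minimality hypothesis to upgrade this square to a unit.

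The first part is straightforward. The assumed minimality makes $\alpha$ indecomposable: any splitting $\alpha=\alpha_1+\alpha_2$ with $\alpha_i\in\co_K^+$ would give $\Tr(\delta\alpha_j)>0$ and hence $\Tr(\delta\alpha_j)<\Tr(\delta\alpha)$, a contradiction. Universality of $Q$ supplies $v\in\co_K^r$ with $Q(v)=\alpha$; and for every non-zero $w\in\co_K^r$, positive-definiteness of $Q$ combined with the injectivity of the embeddings $\sigma_i$ gives $Q(w)\succ 0$, so $\Tr(\delta Q(w))\geq\Tr(\delta\alpha)$ by minimality. Identifying $(\co_K^r,\Tr(\delta Q))$ with $t_\delta\otimes Q$ via Lemma \ref{lemma:tensor}, this reads $\min(t_\delta\otimes Q)=\Tr(\delta\alpha)$, so Proposition \ref{prop:indecom} applies and gives $\alpha=\beta^2$ for some $\beta\in\co_K$ (together with $\min(Q)=1$).

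To show $\beta$ is a unit I argue by contradiction, assuming $\mathrm{N}(\alpha)=\mathrm{N}(\beta)^2\geq 4$. The signature of $\beta$ is well-defined since $\sigma_i(\beta)^2=\sigma_i(\alpha)>0$, so by Corollary \ref{cor:units} there is a unit $u$ with the same signature as $\beta$; then $u\beta\in\co_K^+$, and replacing $\beta$ by $u\beta$ sends $\alpha$ to the totally positive element $u^2\alpha$, which is still indecomposable because multiplication by the totally positive unit $u^2$ is an order-preserving bijection of $\co_K^+$. Hence I may assume $\beta\in\co_K^+$. If $\beta\succ 1$ then $\alpha=1+(\beta^2-1)$ is a decomposition in $\co_K^+$, contradicting indecomposability, so some embedding satisfies $\sigma_i(\beta)\leq 1$; combined with $\mathrm{N}(\beta)\geq 2$ this forces some other embedding to be strictly greater than $1$. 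The final step is to invoke Corollary \ref{cor:units} and Dirichlet's unit theorem to produce a totally positive unit $\eta$ with $0\prec\eta\prec\alpha$: under the logarithmic embedding the totally positive units form a rank $d-1$ lattice in the hyperplane $\{\sum y_i=0\}\subset\R^d$, and the admissible region $\{y:\sum y_i=0,\ y_i<2\log\sigma_i(\beta)\}$ is a non-empty bounded convex polytope, so if one can place a lattice point inside it, then the decomposition $\alpha=\eta+(\alpha-\eta)$ contradicts indecomposability and the lemma follows.

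The hardest step is precisely this last one: the admissible region has $(d-1)$-volume comparable to $(\log\mathrm{N}(\beta))^{d-1}$, which for $\mathrm{N}(\beta)=2$ can be too small for a direct Minkowski-style covering argument when $K$ has a large regulator. To push it through one has to exploit the additional structure extracted from Proposition \ref{prop:indecom}, namely that $\beta$ is itself a minimal vector of the trace form $(\co_K,T_\delta)$; this couples the position of $\beta$ to the shape of $\delta$ and should suffice to force a totally positive unit into the admissible region.
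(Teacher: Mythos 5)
Your first paragraph is correct and coincides with the paper's argument: the trace-minimality forces indecomposability of $\alpha$, universality of $Q$ gives $\Tr(\delta\alpha)=\min(t_\delta\otimes Q)$, and Proposition \ref{prop:indecom} yields $\alpha=\beta^2$ (and $\min(Q)=1$). The genuine gap is in the second half. Your plan is to contradict indecomposability of $\alpha$ by producing a totally positive unit $\eta$ with $\eta\prec\alpha$, but no such $\eta$ --- indeed no totally positive element below $\alpha$ at all --- need exist, because $\alpha=\beta^2$ can itself be indecomposable: if $\mathrm{N}(\beta)=2$ and $d\geq 3$, then $\mathrm{N}(\alpha)=4<2^d$, so $\alpha$ is indecomposable by Lemma \ref{lemma:norm_bound}b). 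Hence the step you yourself flag as the hardest one is not just hard for large regulators; as a purely geometric statement about the unit lattice it is false, and the remark that the extra structure ``should suffice'' is exactly where a proof is missing. Indecomposability alone can never finish this lemma, since plenty of indecomposables are non-units.

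The paper closes the gap differently, by a norm descent that re-uses the universality of $Q$ with a twisted $\delta$: assume $\alpha$ is a non-unit satisfying the hypothesis with smallest possible norm, write $\alpha=\gamma^2$, choose $\varepsilon\in\co_K^{\times}$ with $\varepsilon\gamma\succ 0$ (possible by Corollary \ref{cor:units}), and set $\delta':=\delta\gamma\varepsilon^{-1}\in\co_K^{\vee,+}$. Because $Q$ is universal, $\min(t_{\delta'}\otimes Q)$ equals the minimum of $\Tr(\delta'\beta)$ over $\beta\in\co_K^{+}$, which is at least $\min(t_\delta\otimes Q)$; and the vector representing $\varepsilon\gamma$ attains this value since $\Tr(\delta'\varepsilon\gamma)=\Tr(\delta\gamma^2)=\Tr(\delta\alpha)$. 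Thus $\varepsilon\gamma$ satisfies the same hypothesis with $\delta'$ in place of $\delta$ but has norm $\sqrt{\mathrm{N}(\alpha)}<\mathrm{N}(\alpha)$, contradicting minimality. This descent --- passing the trace-minimality property from $\alpha$ to its square root via a new element of the codifferent --- is the idea your proposal lacks; the observation that $\beta$ is a minimal vector of $t_\delta$ does not by itself control the unit group in the way your Minkowski-style argument would need.
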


Of course, when $Q$ is universal, the condition $\Tr(\delta\alpha)\le \Tr(\delta\beta)$ (for all $\beta\in\co_K^+$) in the lemma is equivalent to our earlier assumption from Proposition \ref{prop:indecom} that $\Tr(\delta\alpha)=\min(t_{\delta}\otimes Q)$.

\begin{proof}
	Every element $\alpha$ that satisfies the assumption of the lemma is indecomposable (if $\alpha=\alpha_1+\alpha_2$ with $\alpha_1,\alpha_2\in\co_K^+$, then $\Tr(\delta\alpha)>\Tr(\delta\alpha_1)$), and Proposition \ref{prop:indecom} implies that $\alpha$ is a square, say $\alpha=\gamma^2$ for some $\gamma\in\co_K$.
	Suppose that $\alpha$ satisfies the assumption, but is not a unit. We can also assume without loss of generality that $\alpha$ is such an element with the smallest possible norm.
	
	Let $\varepsilon\in \co_K^{\times}$ be such that $\varepsilon\gamma\succ 0$; such a unit exists by Corollary \ref{cor:units}. Then $Q$ represents $\varepsilon\gamma$, so there is $v\in\co_K^r$ such that $Q(v)=\varepsilon\gamma$. Denote $\delta':=\delta\gamma\varepsilon^{-1}\in \co_K^{\vee,+}$. Then 
	\[\min(t_{\delta'}\otimes Q)\ge \min(t_{\delta}\otimes Q).\]
	
	The equality here is attained: Let $a$ be the element of $\Z^d\otimes\Z^r$ that corresponds to $v\in\co_K^r$ by Lemma \ref{lemma:tensor}. Then
	$$(t_{\delta'}\otimes Q)(a)=\Tr(\delta'Q(v))=\Tr(\delta'\varepsilon\gamma)=\Tr(\delta\alpha)=\min(t_{\delta}\otimes Q).$$ 
	
	Therefore the element $\varepsilon\gamma\in\co_K^+$ has the property that $\Tr(\delta'\varepsilon\gamma)\le \Tr(\delta'\beta)$ for all $\beta\in\co_K^+$, but has norm 
	$\mathrm{N}(\varepsilon\gamma)=|\mathrm{N}(\gamma)|=\sqrt{\mathrm{N}(\alpha)}<\mathrm{N}(\alpha)$. This is a contradiction with our assumption that $\alpha$ is the element of smallest possible norm. 
\end{proof}

Clearly, if for $\alpha\in\co_K^+$ there exists $\delta\in \co_K^{\vee,+}$ such that $\Tr(\delta\alpha)=\min(t_{\delta})$, then $\alpha$ is indecomposable. 
Unfortunately, the converse implication does not hold, one counterexample being the indecomposable $\zeta_7^2+\zeta^{-2}_7-2$ in $K=\QQ(\zeta_7+\zeta_7^{-1})$.
Nevertheless, it holds in real quadratic fields and there are similar characterizations in higher degrees as well \cite{KT}.

\

Let us now record for further use the following well-known facts.

\begin{lemma}\label{lem:classical}
	Let $(L,Q)$ be a classical $\Z$-lattice of rank $r$ such that $\min(Q)=1$. Then:
	
	a) The number $\# \mathcal M(L)$ of minimal vectors $v\in L$ is at most $2r$.
	
	b) If $Q$ is moreover indecomposable quadratic form, then $r=1$ and $Q$ is equivalent to the quadratic form $x^2$.
\end{lemma}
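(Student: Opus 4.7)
The plan is to exploit the Cauchy--Schwarz inequality together with the integrality of the bilinear form $B_Q$ on a classical lattice.

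For part (a), let $v, w \in \mathcal{M}(L)$ be two minimal vectors, so $Q(v) = Q(w) = 1$. Since $Q$ is classical, $B_Q(v, w) \in \Z$, and by Cauchy--Schwarz $|B_Q(v,w)|^2 \leq Q(v)Q(w) = 1$, so $B_Q(v,w) \in \{-1, 0, 1\}$. If $B_Q(v,w) = 1$ then $Q(v - w) = 1 - 2 + 1 = 0$, forcing $v = w$ by positive definiteness; similarly $B_Q(v,w) = -1$ forces $w = -v$. Hence any two minimal vectors $v, w$ with $w \neq \pm v$ are orthogonal. Consequently the minimal vectors split into $\pm$-pairs, with distinct pairs spanning pairwise orthogonal rank-one sublattices inside a space of rank $r$. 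This yields at most $r$ pairs, hence $\#\mathcal{M}(L) \leq 2r$.

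For part (b), fix any minimal vector $v$ and consider $v^\perp := \{w \in L : B_Q(v, w) = 0\}$. Because $Q(v) = 1$ and $B_Q$ is $\Z$-valued on $L$, every $w \in L$ decomposes uniquely as
\[
w = B_Q(v,w)\, v + \bigl(w - B_Q(v,w)\, v\bigr),
\]
with the second summand lying in $v^\perp$. Hence $L = \Z v \oplus v^\perp$ as an orthogonal direct sum of $\Z$-lattices, and $Q$ restricts to $x^2$ on $\Z v$. Since $Q$ is positive definite, so is its restriction to $v^\perp$; in particular $(v^\perp, Q|_{v^\perp})$ is itself a classical $\Z$-lattice of rank $r - 1$.

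The main (and essentially only) obstacle is to conclude from indecomposability that the orthogonal summand $v^\perp$ must vanish. But this is forced by the very definition of indecomposability: a non-trivial orthogonal decomposition $L = \Z v \perp v^\perp$ with both summands non-zero would exhibit a non-trivial splitting of $Q$ as a $\Z$-form, contradicting the assumption. Therefore $v^\perp = 0$, giving $r = 1$, and $L = \Z v$ with $Q(xv) = x^2$, so $Q$ is equivalent to $x^2$.
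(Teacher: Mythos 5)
Your proof is correct, but it takes a more self-contained route than the paper. The paper simply invokes the classical structure theorem (e.g.\ \cite[Proposition 4.10.7]{M}): the sublattice $L_1$ generated by the minimal vectors of a classical lattice with $\min(Q)=1$ is isometric to $\Z^s$ with the standard form, and $L=L_1\perp L_2$; part a) then follows by counting the $2s\le 2r$ vectors of norm $1$ in $\Z^s$, and part b) because indecomposability forces $L=L_1\simeq\Z^s$ with $s=r=1$. You instead reprove the relevant special case of that theorem directly: for a) the Cauchy--Schwarz argument showing that minimal vectors not equal to $\pm$ each other are orthogonal (using that $B_Q$ is $\Z$-valued on a classical lattice), hence form at most $r$ antipodal pairs of linearly independent vectors; for b) the standard splitting of a unimodular vector, $w=B_Q(v,w)v+(w-B_Q(v,w)v)$ with the second summand in $v^\perp$, giving $L=\Z v\perp v^\perp$, so indecomposability kills $v^\perp$ and yields $r=1$, $Q\simeq x^2$. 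Both arguments are sound; the paper's is shorter at the cost of a citation, while yours is elementary and makes the mechanism (orthogonality of norm-one vectors, splitting off a unimodular sublattice) explicit. The only point worth making explicit in your write-up is that the orthogonal decomposition $L=\Z v\perp v^\perp$ of the lattice corresponds, after a change of basis, to writing $Q$ as an orthogonal sum of $\Z$-forms, which is exactly the notion of decomposability used in the paper.
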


\begin{proof}
	We shall prove the lemma as an easy application of the following classical Theorem (e.g., \cite[Proposition 4.10.7]{M}):
	
	\emph{Let $(L,Q)$ be a classical $\Z$-lattice of rank $r$ such that $\min(Q)=1$. Let $L_1$ be the sublattice of $L$ generated by the minimal vectors $\mathcal M(L)$ of $L$ (i.e., by $v\in L$ such that $Q(v)=1$), and $Q_1$ the restriction of $Q$ to $L_1$. Then $L_1$ is equivalent to $\Z^s$ equipped with the usual inner product (for some $s\leq r$), and $L=L_1\perp L_2$ for some lattice $L_2$.}
	
	a) By definition, each $v\in L$ with $Q(v)=1$ corresponds to precisely one $v'\in L_1$ with $Q_1(v')=1$. The number of such vectors in $L_1$ is the same as the number of these vectors in $\Z^s$, which is clearly $2s\leq 2r$.
	
	b) Since $L=L_1\perp L_2$, the indecomposability assumption implies that $L=L_1$, which is equivalent to $\Z^s$. But this is indecomposable only when $s=r=1$.
\end{proof}

We can now turn our attention to the (non-)existence of universal $\Z$-forms using the results established above.
For specific fields, one can often use the following proposition to deal with classical forms, although in general there of course need not exist any non-unit with norm smaller than $2^d$.

\begin{prop}\label{prop:norm2}
	If there exists $\alpha\in \co_K$ such that $1<|\mathrm{N}(\alpha)|<2^d$, then there does not exist a classical $\Z$-form that is universal over $\co_K$. 
\end{prop}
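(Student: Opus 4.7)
The plan is to assume for contradiction that $Q$ is a classical $\Z$-form universal over $\co_K$ and derive a contradiction from $\alpha$. By Corollary~\ref{cor:units}, $\co_K$ has units of every signature, so some unit multiple $\alpha':=\varepsilon\alpha$ is totally positive, with $\mathrm N(\alpha')=|\mathrm N(\alpha)|\in(1,2^d)$; by Lemma~\ref{lemma:norm_bound}(b), $\alpha'$ is indecomposable. Universality of $Q$ at $1\in\co_K^+$ combined with the $E$-type of $t_1$ yields $d\min(Q)=\min(t_1\otimes Q)\leq \Tr(1)=d$, so $\min(Q)=1$. Since $Q$ is a classical $\Z$-lattice containing a norm-$1$ vector, it splits as $Q\simeq \langle 1\rangle^s\perp Q''$ with $Q''$ classical and $\min(Q'')\geq 2$ over $\Z$. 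Writing a representation $Q(u)=\alpha'$ as $u=(y_1,\dots,y_s,y')$, the indecomposability of $\alpha'$ forces either (\emph{Case A}) $\alpha'=y_{i_0}^2$ for some $i_0$, or (\emph{Case B}) $\alpha'=Q''(y')$.

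The heart of the argument is eliminating Case B. Here I would take $\delta:=\mathrm N(\alpha')/\alpha'$; a standard computation from the minimal polynomial of $\alpha'$ shows $\alpha'$ divides $\mathrm N(\alpha')$ in $\co_K$, so $\delta\in\co_K^+\subseteq\co_K^{\vee,+}$. Lemma~\ref{lemma:tensor}, together with the $E$-type of $t_\delta$ and $\min(Q'')\geq 2$, gives
\[
d\mathrm N(\alpha')=\Tr(\delta\alpha')\geq \min(t_\delta\otimes Q'')=\min(t_\delta)\min(Q'')\geq 2\min(t_\delta).
\]
Since $Q$ is universal, Lemma~\ref{lemma:squares} and Corollary~\ref{cor:units} identify $\min(t_\delta)$ with $\min_{\beta\in\co_K^+}\Tr(\delta\beta)$ (the minimizer is a totally positive unit, hence a square), and AM-GM applied with $\mathrm N(\beta)\geq 1$ gives $\min(t_\delta)\geq d\mathrm N(\delta)^{1/d}=d\mathrm N(\alpha')^{(d-1)/d}$. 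Substituting yields $\mathrm N(\alpha')\geq 2^d$, contradicting the hypothesis.

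It remains to handle Case A, where $\alpha'=\eta^2$ for some $\eta\in\co_K$ satisfying $1<|\mathrm N(\eta)|=\sqrt{\mathrm N(\alpha')}<2^{d/2}\leq 2^d$. Multiplying $\eta$ by a suitable unit to make it totally positive produces a new indecomposable element to which the entire argument applies verbatim, with Case B again excluded by the same inequality (as its norm is still below $2^d$). Iterating, the norm at step $k$ becomes $\mathrm N(\alpha')^{1/2^k}$, which must remain an integer greater than $1$; this would require $\mathrm N(\alpha')$ to be a $2^k$-th power for every $k$, impossible for a fixed positive integer $>1$. Hence Case B is forced at some step, producing the contradiction. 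The main technical point is the choice $\delta=\mathrm N(\alpha')/\alpha'$ in Case B: this specific test element turns the $E$-type and AM-GM inequalities into a lower bound $\mathrm N(\alpha')\geq 2^d$ that is exactly tight against the threshold in the hypothesis.
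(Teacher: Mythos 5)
Your proof is correct, but it follows a genuinely different route from the paper's. The paper decomposes $Q$ into its indecomposable constituents, observes that the indecomposable $\alpha$ must be represented by a single constituent $Q'$, and runs a two-case analysis with the test element $\delta=m/\alpha\in\co_K^{\vee,+}$: either $\alpha$ corresponds to a minimal vector of $t_\delta\otimes Q'$ and Proposition \ref{prop:indecom} makes it a square, or the minimal vector yields an indecomposable $\beta$ of smaller norm forcing $\min(Q')=1$, whence $Q'$ is equivalent to $x^2$ by Lemma \ref{lem:classical}b) and $\alpha$ is again a square; choosing $\alpha$ of minimal norm then gives the contradiction. You instead first extract $\min(Q)=1$ from universality at $1$ together with the $E$-type of $t_1$, split off the unimodular part, $Q\simeq\langle 1\rangle^s\perp Q''$ with $\min(Q'')\ge 2$, via the same Martinet-type splitting theorem, and kill the case $\alpha'=Q''(y')$ with the tailored test element $\delta=\mathrm N(\alpha')/\alpha'$, where $E$-type multiplicativity plus AM--GM give $d\,\mathrm N(\alpha')\ge 2d\,\mathrm N(\alpha')^{(d-1)/d}$, i.e.\ $\mathrm N(\alpha')\ge 2^d$, a bound exactly tight against the hypothesis and one that makes transparent where the constant $2^d$ comes from; your iteration in Case A is the paper's minimal-norm descent in disguise. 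Two small remarks: your detour through Lemma \ref{lemma:squares} and Corollary \ref{cor:units} to identify $\min(t_\delta)$ with $\min_{\beta\in\co_K^+}\Tr(\delta\beta)$ is superfluous, since AM--GM applied directly to $\Tr(\delta x^2)\ge d\,\mathrm N(\delta)^{1/d}\mathrm N(x)^{2/d}$ already yields $\min(t_\delta)\ge d\,\mathrm N(\delta)^{1/d}$ (so that step needs no universality at all); and in the degenerate case $s=r$, where $Q''$ is trivial, Case B is vacuous and only Case A occurs, which your descent handles anyway.
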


\begin{proof}
	Let $\alpha \in\co_K$ be such that $1<|\mathrm{N}(\alpha)|<2^d$.  If a $\Z$-form $Q$ is universal over $\co_K$, then by Corollary \ref{cor:units} there are units of all signatures in $\co_K$. Thus, after multiplying by a suitable unit, we can assume that $\alpha\succ 0$, and furthermore take $\alpha$ to be such element of the smallest possible norm. By Lemma \ref{lemma:norm_bound}b), $\alpha$ is indecomposable.
	
	Let us consider the orthogonal decomposition of $Q$ (over $\Z$), say $Q=Q_1\perp\dots\perp Q_k$. If $v=(v^{(1)},\dots, v^{(k)})$ is the corresponding decomposition of a vector $v$ representing $\alpha$, then $\alpha=Q(v)=Q_1(v^{(1)})+\dots+Q_k(v^{(k)})$. We have $Q_i(v^{(i)})\succeq 0$ for all $i$, and so all but one of these values must be zero, as $\alpha$ is indecomposable.
	Hence $\alpha$ is represented by one of the indecomposable constituents, which we will denote $Q'$. Note that $Q'$ is an indecomposable classical $\Z$-form (because $Q$ was classical).

	Let $m\in \mathbb{N}$ be such that $m\Tr\left(\frac{\beta}{\alpha}\right)\in \Z$ for all $\beta \in \co_K$. We denote $\delta:=\frac{m}{\alpha}\in \co_K^{\vee,+}$. Let us now prove that $\alpha$ is a square by distinguishing two possible cases: 
	
	1. If $\Tr(\delta\alpha)=\min(t_{\delta}\otimes Q')$, then by Proposition \ref{prop:indecom} it follows that $\alpha$ is a square. 
	
	2. Otherwise there exists $\beta \in \co_K^{+}$ such that $\Tr(\delta\alpha)>\Tr(\delta\beta)$ and $Q'(w)=\beta$ with $w\in \mathcal{M}(t_{\delta}\otimes Q')$. Therefore
	$$
	dm=\Tr(m)=\Tr(\delta\alpha)>\Tr(\delta\beta)=m\Tr\left(\frac{\beta}{\alpha}\right),
	$$
	and so $d>\Tr\left(\frac{\beta}{\alpha}\right)$. 
	
	The inequality between arithmetic and geometric means then gives
	$$1	>\frac 1d \Tr\left(\frac{\beta}{\alpha}\right)\geq\mathrm{N}\left(\frac{\beta}{\alpha}\right)^{1/d},$$ and so $2^d>\mathrm{N}(\alpha)	> \mathrm{N}(\beta)$.
 	Thus $\beta$ is indecomposable by Lemma \ref{lemma:norm_bound}b).

	Applying Proposition \ref{cor:minforms} for the element $\beta$ we obtain that $\min(Q')=1$, and given that $Q'$ is an indecomposable classical $\Z$-form, this implies that $Q'$ is just a form of one variable, $Q'(x)=x^2$, by Lemma \ref{lem:classical}b). This form represents $\alpha$, which therefore is a square. 
	
	\medskip
	
	In both cases we proved that $\alpha$ is a square, say $\alpha=\gamma^2$, and therefore there exists a non-unit element $\gamma$ with a smaller norm than $\alpha$, contradicting the assumption of minimality of norm of $\alpha$. 	 
\end{proof}

\theoremstyle{plain}
\newtheorem*{thm:1}{Theorem \ref{Thm:1}}
\begin{thm:1}
	There does not exist a $\Z$-form that is universal over a real quadratic number field $K$, unless $K=\Q(\sqrt{5}).$
\end{thm:1} 

\begin{proof}
	Let $K=\Q(\sqrt D),$ where $D\ge 2$ is squarefree integer. Let $\co_K=\Z[\omega]$ be the ring of integers in $K$, where $\omega=\sqrt{D}$ or $\frac{1+\sqrt{D}}{2}$, depending on whether $D\equiv 2, 3$, or $1\pmod{4}$. Let $f(x)$ be the minimal polynomial of $\omega$. From Corollary \ref{cor:units} it follows that if there exists a universal $\Z$-form over $\co_K$, then $\co_K$ has units of all signatures; fix a unit $\varepsilon>0$ such that its conjugate $\overline\varepsilon<0$. 
	
	We know that $\co_K^{\vee}=\frac 1{f'(\omega)}\co_K$. Thus we can let
	$\delta:=\varepsilon\cdot\frac 1{f'(\omega)}$ so that we have $\delta\succ 0$ and $\co_K^{\vee}=\delta\co_K$. The form $t_\delta$ has rank 2, and so from Kitaoka's Theorem \ref{rmk:E-type} it follows that $t_\delta$ is of $E$-type. 
	
	The element $\alpha:=\omega\varepsilon^{-1}\in\co_K^+$ satisfies $\Tr(\delta\alpha)=1$: Let us show the easy computation only in the case $\omega=\sqrt D$, when we have $\delta=\frac \varepsilon{f'(\omega)}=\frac \varepsilon{2\sqrt D}$, and so
	$\Tr(\delta\alpha)=\Tr\left(\frac {\sqrt D}{2\sqrt D}\right)=1$.
	(In fact, all elements with $\Tr(\delta\alpha)=1$ form an arithmetic progression \cite[Example 1]{Ya}.)
	
	By Lemma \ref{lemma:squares} we therefore have that $\alpha\in\co_K^+$ is a unit, and so $\mathrm{N}(\omega)=-\mathrm{N}(\alpha)=-1$.
	If $D\equiv 2,3 \pmod{4}$, then $\mathrm{N}(\omega)=\mathrm{N}(\sqrt{D})=-D\neq -1$. When $D\equiv 1\pmod{4}$, we have $-1=\mathrm{N}(\omega)=\frac{1-D}4$, and so the only possibility is $D=5$.
\end{proof}

\section{Dedekind zeta function}\label{s:5}

Let $K$ be a totally real number field of degree $d$ and let $\Delta_K$ denote the discriminant of $K$. 
Results of the previous section suggest that elements of the codifferent which have small trace play a key role in the study of universal $\Z$-forms; 
we shall use Siegel's formula \cite{Si1} to estimate the number of these elements in terms of Dedekind zeta function $\zeta_K(s)$. We start by reviewing its basic properties following \cite[\S 1]{Z}
as reference for all the facts that we mention.

Dedekind zeta function $\zeta_K(s)$ of $K$ for $s\in \mathbb{C}$
is the meromorphic function that for $\mathrm{Re}(s)>1$ satisfies
\[
\zeta_K(s)=\sum^{\infty}_{n=1}\dfrac{F(n)}{n^s},
\]
where $F(n)$ is the number of ideals in $\co_K$ of norm $n$ (and the norm of an ideal $I$ is $\mathrm N (I)=\# \co_K/I$).

We will be interested in the values at the points $s=2$ and $s=-1$; clearly $\zeta_K(2)>\dfrac{F(1)}{1^2}=1$. From the functional equation 
we see that
\begin{equation}\label{eq:zeta-fun}
\zeta_K(-1)=(-1)^d|\Delta_K|^{3/2}\left(\dfrac{1}{2\pi^2}\right)^d\zeta_K(2).
\end{equation} 

Assume from now on that the degree $d=2,3,4,5,7$, and let $b_d=\dfrac{1}{240},\dfrac{-1}{504},\dfrac{1}{480},\dfrac{-1}{264},\dfrac{-1}{24},$ respectively. 
Then we have
\begin{equation}\label{eq:zeta-neg}
\zeta_K(-1)=2^db_d\sum_{\substack{\alpha \in \co_K^{\vee,+}\\\Tr(\alpha)=1}}\sigma((\alpha)(\co_K^{\vee})^{-1}),
\end{equation} 
where
\begin{equation*}
\sigma(I)=\sum_{J\mid I}\mathrm N(J)
\end{equation*} 
and $(\alpha)$ denotes the fractional ideal $\alpha\co_K$.

Putting together \eqref{eq:zeta-fun} and \eqref{eq:zeta-neg}, we get
\begin{equation}\label{eq:zeta-sum}
\sum_{\substack{\alpha \in \co_K^{\vee,+}\\\Tr(\alpha)=1}}\sigma((\alpha)(\co_K^{\vee})^{-1})=\dfrac{(-1)^d}{b_d}|\Delta_K|^{3/2}\left(\dfrac{1}{4\pi^2}\right)^d\zeta_K(2).
\end{equation}

We can now use this formula to prove our main result that greatly restricts possible number fields of small degrees with a universal $\Z$-form.

\begin{theorem}\label{thm 16}
	There does not exist a totally real number field $K$ of degree $2,3,4,5,$ or $7$, with a principal codifferent ideal and a universal $\Z$-form defined over it, unless $K=\Q(\sqrt{5})$ or $\QQ(\zeta_7+\zeta_7^{-1})$.
\end{theorem}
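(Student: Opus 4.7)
The plan is to combine the structural results of Section~\ref{s:4} with the counting afforded by Siegel's formula \eqref{eq:zeta-sum}. Suppose that $K$ has degree $d\in\{2,3,4,5,7\}$, principal codifferent, and admits a universal $\Z$-form $Q$. By Corollary~\ref{cor:units} there is a unit of every signature in $\co_K$, so after multiplying a generator of $\co_K^\vee$ by a unit I may fix $\delta_0\succ 0$ with $\co_K^\vee=\delta_0\co_K$. The trace form $t_{\delta_0}$ is then a classical $\Z$-form of rank $d\le 7$, hence of $E$-type by Theorem~\ref{rmk:E-type}, so all of the machinery of Section~\ref{s:4} applies.

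The central point is that universality of $Q$ tightly controls the elements of $\co_K^{\vee,+}$ of trace~$1$. Each such element has the form $\alpha=\delta_0\beta$ with $\beta\in\co_K^+$ and $\Tr(\delta_0\beta)=1$; since the trace of any element of $\co_K^{\vee,+}$ is a positive integer, such a $\beta$ minimizes $\Tr(\delta_0\,\cdot\,)$ on $\co_K^+$. Lemma~\ref{lemma:squares} then forces $\beta$ to be a unit, and Corollary~\ref{cor:units} makes it the square $\gamma^2$ of some $\gamma\in\co_K^\times$; in particular $\min(t_{\delta_0})=1$ and every $\beta$ corresponds to an unordered pair $\{\pm\gamma\}$ of minimal vectors of $t_{\delta_0}$, so by Lemma~\ref{lem:classical}(a) there are at most $d$ of them. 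Moreover $(\alpha)(\co_K^\vee)^{-1}=(\beta)=\co_K$, so each summand $\sigma((\alpha)(\co_K^\vee)^{-1})=1$ in \eqref{eq:zeta-sum}, and its left side is bounded above by $d$. Feeding this into \eqref{eq:zeta-sum} and using $\zeta_K(2)>1$ yields the explicit estimate
\[
|\Delta_K|^{3/2}\;<\;d\cdot\frac{(-1)^d}{b_d}\cdot(4\pi^2)^d,
\]
which numerically forces $|\Delta_K|\le 5$ for $d=2$, $|\Delta_K|\le 51$ for $d=3$, correspondingly modest bounds for $d=4,5$, and (comparing with known lower bounds on septic discriminants) leaves no admissible field at all for $d=7$.

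Matching this discriminant bound against tables of small-discriminant totally real fields with principal codifferent leaves only $\Q(\sqrt 5)$, $\QQ(\zeta_7+\zeta_7^{-1})$, and a very short list of candidate cyclotomic subfields in degrees~$4$ and~$5$. The remaining work is therefore to eliminate each of these non-exceptional candidates, which I expect to be the main obstacle. For each such $K$, Proposition~\ref{lemma:3} reduces the question to a finite check over $\Z$-forms of rank at most $d$: Proposition~\ref{prop:norm2} immediately excludes classical universal forms whenever $\co_K$ contains a non-unit of norm less than $2^d$, and for the non-classical case (and any stragglers) one falls back on an explicit Magma computation exhibiting a totally positive $\alpha\in\co_K^+$ that no rank-${\le}d$ $\Z$-form represents. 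It is this concrete, field-by-field verification---rather than any new theoretical input---that is the tedious but essentially routine step.
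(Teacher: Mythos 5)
Your main argument is the same as the paper's: trace-one elements of $\co_K^{\vee,+}$ are forced (via Lemma \ref{lemma:squares} and Corollary \ref{cor:units}) to be of the form $\delta_0\gamma^2$ with $\gamma$ a unit, hence give pairs $\pm\gamma$ of minimal vectors of the classical form $t_{\delta_0}$ with $\min(t_{\delta_0})=1$, so Lemma \ref{lem:classical}(a) bounds the left side of \eqref{eq:zeta-sum} by $d$, and $\zeta_K(2)>1$ then bounds $|\Delta_K|$; comparison with tables leaves $\Q(\sqrt5)$, $\QQ(\zeta_7+\zeta_7^{-1})$, one quartic field (discriminant $725$) and one quintic field ($\QQ(\zeta_{11}+\zeta_{11}^{-1})$, discriminant $14641$), and no septic field. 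One small slip: your displayed inequality has $b_d$ inverted; from \eqref{eq:zeta-sum} with left side at most $d$ one gets $|\Delta_K|^{3/2}<d\,(4\pi^2)^d\,|b_d|$, i.e.\ $|\Delta_K|<\left|(4\pi^2)^d d\, b_d\right|^{2/3}$, which is what produces the bounds $5.6$, $51.2$, etc.\ that you quote (with $(-1)^d/b_d=1/|b_d|$ in place of $|b_d|$ the bound would be far too weak to be useful).

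The genuine gap is the elimination of the two surviving candidates, which you defer to a ``routine'' computation. For the quartic field the paper does essentially what you sketch: $\omega+2$ has norm $11<2^4$, so Proposition \ref{prop:norm2} kills classical forms, and for non-classical forms one uses Proposition \ref{lemma:3} plus Conway--Sloane (Theorem \ref{thm:cs}, with $k=1$ since the rank is at most $4$) to reduce to showing $2(\omega+2)$ is not a sum of squares, a finite check via the norm bounds of Lemma \ref{lemma:norm_bound}. But for the quintic field $\QQ(\zeta_{11}+\zeta_{11}^{-1})$ the paper does not proceed by exhibiting an element not represented by any rank-$\le 5$ $\Z$-form; instead it uses a specific idea you do not supply: since $(\beta)$ with $\beta=\omega+2$ is a ramified prime above $11$, the different theorem gives $\beta^{-1}\in\co_K^{\vee,+}$, an explicit computation shows $\min(t_{\beta^{-1}})=5=\Tr(\beta^{-1}\beta)$, and Proposition \ref{prop:indecom} then forces the indecomposable $\beta$ to be a square, a contradiction with $\mathrm{N}(\beta)=11$. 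Your fallback strategy is decidable in principle (via Proposition \ref{lemma:3} and Theorem \ref{thm:cs}), but you have neither produced a witness $\alpha$ nor verified that the search terminates favourably, so as written the degree-$4$ and especially degree-$5$ cases are asserted rather than proved; this is precisely where the paper's proof does its remaining nontrivial work.
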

\begin{proof}
	Since $d\leq 7<43$, by Kitaoka's Theorem \ref{rmk:E-type} the form $t_\delta$ is of $E$-type. Hence we can use the results of Section \ref{s:4}.
	Let us assume that there exists a universal $\Z$-form over $\co_K$. 
	By Corollary \ref{cor:units} there are units of all signatures in $\co_K$. By the assumption that $\co_K^{\vee}$ is a principal ideal, there exists some $\delta \in K$ such that $\co_K^{\vee}=(\delta).$ Without loss of generality, let $\delta\succ 0$. 
	
	By Lemma \ref{lemma:squares}, if $\alpha=\alpha'\delta\in\co_K^{\vee,+}$ is such that $\Tr(\alpha)=1$, then $\alpha'\in\co_K^{\times,+}$. As then $(\alpha)=(\delta)=\co_K^\vee$, we deduce that 
	\begin{equation*}
	\sigma((\alpha)(\co_K^{\vee})^{-1})=\sigma(\co_K^{\vee}(\co_K^{\vee})^{-1})=\sigma(\co_K)=1.
	\end{equation*}	
	Therefore, the left-hand side of \eqref{eq:zeta-sum} is equal to the number of $\alpha\in\co_K^{\vee,+}$ that have $\Tr(\alpha)=1$. The right-hand side of \eqref{eq:zeta-sum} is non-zero as $\zeta_K(2)\not=0$, which in particular implies that there is at least one such $\alpha\in\co_K^{\vee,+}$ and that $\min(t_{\delta})=1$. 
	
	Let $\alpha=\alpha'\delta\in\co_K^{\vee,+}$ be such that $\Tr(\alpha)=1$. We have already seen that by Lemma \ref{lemma:squares} we have $\alpha'\in\co_K^{\times,+}$, and moreover by Corollary \ref{cor:units}, $\alpha'$ is a square, so let $\alpha'=\beta^2$ for some $\beta\in\co_K$. Further let $v\in\Z^d$ be such that $\beta=\sum v_i\omega_i$ for an integral basis $\omega_i$ of $\co_K$. We then have 
	\[t_{\delta}(\pm v)=\Tr \left(\delta \left(\pm\sum v_i\omega_i\right)^2\right)=\Tr (\delta\beta^2)=\Tr (\delta\alpha')=1.
	\]
	Thus the two vectors $\pm v$ that were attached to $\alpha$ are minimal vectors of the $\Z$-lattice $(\Z^d,t_\delta)$, whose minimum is 1.
	In the discussion just before Lemma \ref{lemma:tensor} we saw that $t_{\delta}(x)$ is classical, and so it has at most $2d$ minimal vectors by Lemma \ref{lem:classical}a). Therefore,	
	\begin{equation*}
	2d\ge 2\cdot\#\{\alpha \in \co_K^{\vee,+}|\Tr(\alpha)=1\}=2\sum_{\substack{\alpha \in \co_K^{\vee,+}\\ \Tr(\alpha)=1}}\sigma((\alpha)(\co_K^{\vee})^{-1}).
	\end{equation*}
	
	Using this inequality and the fact that $\zeta_K(2)>1$ in equation \eqref{eq:zeta-sum}, we get
	\[ |\Delta_K|<\left|\left(4\pi^2\right)^d d b_d\right|^{2/3}. \]
	In the table below we summarize the resulting bounds:
	\[
	\begin{tabular}{| l | r| }
	\hline
	$d$ & $|\Delta_K|<$ \\
	\hline
	\hline
	2 & 5.6... \\
	3 & 51.2...\\
	4 & 742.8... \\
	5 & 14886.9... \\
	7 & 12386158.6...\\
	\hline
	\end{tabular}
	\]
	From online database of number fields (described in \cite{JR}; cf. also \cite{V}) we find that there are only a few totally real number fields $K=\QQ(\omega)$ that satisfy the above bounds:	
	\[
	\begin{tabular}{|c|l|c|c|}
	\hline
	$d$ &Minimal polynomial of $\omega$ & $|\Delta_K|$&Narrow class number\\
	\hline
	\hline
	2&$x^2 - x - 1$ &5&1\\
	3&$x^3 - x^2 - 2x + 1$&49&1\\
	4&$x^4 - x^3 - 3x^2 + x + 1$&725&1 \\
	5&$x^5 - x^4 - 4x^3 + 3x^2 + 3x - 1$&14641&1\\
	\hline
	\end{tabular}
	\]
	First, note that there does not exist a number field satisfying the above bound for degree 7. Quadratic, cubic, and quintic number fields correspond to the maximal real subfields of cyclotomic fields. In particular, we have $\QQ(\zeta_5+\zeta_5^{-1}), \QQ(\zeta_7+\zeta_7^{-1})$, and $\QQ(\zeta_{11}+\zeta_{11}^{-1})$, respectively. We need to exclude the cases in degrees $d=4$ and 5.
	
	\
	
	${d=4}.$
	Let $K=\Q(\omega)$, where $\omega$ is a root of $f(x)=x^4 - x^3 - 3x^2 + x + 1$. Given that $\Delta_f=\Delta_K$ (where $\Delta_f$ is the discriminant of the polynomial $f$), we have $\co_K=\Z[\omega]$. The totally positive integer $\omega+2$ has norm $11<2^4$, and so it is indecomposable by Lemma \ref{lemma:norm_bound}b), and from Proposition \ref{prop:norm2} it follows that there is no universal classical $\Z$-form. However, we also need to exclude non-classical forms.
	
	Assume that there is a universal (non-classical) $\Z$-form over $\co_K$. In particular it represents $\omega+2$, and so by Proposition \ref{lemma:3}, this element is represented by a $\Z$-form $Q'$ of rank $\leq 4$. But then $2(\omega+2)$ is represented by the classical $\Z$-form $2Q'$, which in turn is represented as sum of squares by Theorem \ref{thm:cs}.
	
	Hence it suffices to show that $2(\omega+2)$ cannot be represented as a sum of squares; assume that $2(\omega+2)\succeq \alpha^2$ for some $\alpha\neq 0$, without loss of generality $\alpha\succ 0$ (as there are units of all signatures).
	$2(\omega+2)$ is not a square, and so $2(\omega+2)= \alpha^2+\gamma$ with $\gamma\succ 0$.
	Then by Lemma \ref{lemma:norm_bound}a) we have $$\mathrm{N}(\alpha)^{1/2}+1\leq\mathrm{N}(\alpha)^{1/2} +\mathrm{N}(\gamma)^{1/4}\leq \mathrm{N}(2(\omega+2))^{\frac{1}{4}}=2\cdot 11^{1/4},$$ and so 
	$\mathrm{N}(\alpha)<7$. We can easily check in Magma that the only elements of norm less than 11 in $\co_K$ are units, and so if  $2(\omega+2)$ is a sum of squares, then it is a sum of squares of units. From the bound of Lemma \ref{lemma:norm_bound}a) it further follows that there are at most three summands. The trace of $2(\omega+2)$ is $18$, thus by checking all the combinations of totally positive units of small trace, we confirm that $2(\omega+2)$ cannot be represented as a sum of squares. 
	
	\
	
	$d=5$.
	Let $\omega=\zeta_{11}+\zeta_{11}^{-1}$ and $K=\QQ(\omega)$. There (again) exists a totally positive integer of norm 11 in $\co_K$, i.e., $\beta=\omega+2$.  Given that $\mathrm{N}(\beta)=11<2^5$, Lemma \ref{lemma:norm_bound}b) implies that $\beta$ is indecomposable. 
	
	By the different theorem \cite[Theorem 4.24]{N}, every ramified prime ideal of $\co_K$ divides the different $(\co_K^{\vee})^{-1}$. In our case, the norm of $\beta$ is prime and $\beta\mid 11\mid \Delta_K$, and so the principal ideal $(\beta)$ is a ramified prime ideal. It therefore divides the different, and so inversely, $\beta^{-1}$ lies in the codifferent, i.e., $\beta^{-1}\in\co_K^{\vee,+}$. 
	
	If there exists a universal $\Z$-form $Q$ over $K$, then 
	$$\min(t_{\beta^{-1}})\min(Q)=\min(t_{\beta^{-1}}\otimes Q)	\le \Tr(\beta^{-1}\beta)=\Tr(1)= 5,$$
	as $Q$ represents $\beta$. Since $\Z[\omega]=\co_K$, we compute in Magma the Gram matrix of $t_{\beta^{-1}}$ corresponding to the integral basis $\{1,\omega,\omega^2,\omega^3,\omega^4\}$:
	\begin{equation*}
	\begin{pmatrix}
	5&-5&11&-13&30\\
	-5&11&-13&30&-35\\
	11&-13&30&-35&86\\
	-13&30&-35&86&-94\\
	30&-35&86&-94&252
	\end{pmatrix}.\end{equation*}
	Using this, we check (in Magma again) that 
	$\min(t_{\beta^{-1}})=5$. Thus by Proposition \ref{prop:indecom} it follows that $\beta$ is a square, which is impossible given that the norm of $\beta$ is 11. Therefore, there does not exist universal $\Z$-form over $\co_K$.
\end{proof}

We note that the argument in the second part of the proof fails for $\QQ(\zeta_7+\zeta_7^{-1})$. Even though we can find an indecomposable element $\beta$ of norm $7$, we have that $\min(t_{\beta^{-1}})=2$, and so $\beta$ does not correspond to a minimal vector.

\section{Existence of universal forms}\label{s:6}

In the previous sections we have proved that there does not exist a universal $\Z$-form in a number of cases. Let us now turn our attention to the converse problem, namely, of proving the existence of a universal $\Z$-form over $\QQ(\zeta_7+\zeta_7^{-1})$.

\begin{lemma}\label{prop:7ind}
	Let $K=\QQ(\zeta_7+\zeta_7^{-1})$. Then the binary quadratic form $Q(x,y)=x^2+xy+y^2$ represents all indecomposable integers of $K$. 
\end{lemma}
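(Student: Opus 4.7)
The plan is to reduce the lemma to a finite, explicit verification. The first observation is that $Q(\xi u,\xi v) = \xi^2 Q(u,v)$ for every $\xi\in\co_K$, so the set of elements represented by $Q$ over $\co_K$ is closed under multiplication by squares of units. From the table in the proof of Theorem~\ref{thm 16}, the narrow class number of $K=\QQ(\zeta_7+\zeta_7^{-1})$ coincides with its class number (both equal to $1$), which is equivalent to the signature map $\co_K^\times\to\{\pm1\}^3$ being surjective. A dimension count on $\co_K^\times/(\co_K^\times)^2$ as an $\mathbb F_2$-vector space then forces every totally positive unit to be a square. Hence representability by $Q$ is invariant under the full group $\co_K^{\times,+}$; since indecomposability is also $\co_K^{\times,+}$-invariant (if $\eta\alpha=\alpha_1+\alpha_2$ with $\eta\in\co_K^{\times,+}$ and $\alpha_i\in\co_K^+$, then $\alpha=\eta^{-1}\alpha_1+\eta^{-1}\alpha_2$ is also a decomposition in $\co_K^+$), it suffices to verify the lemma on a single representative of each $\co_K^{\times,+}$-orbit of indecomposables in $\co_K^+$.

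Second, I would enumerate these orbit representatives explicitly. Combining Proposition~\ref{prop:indecom} with Lemma~\ref{lemma:squares} one obtains an effective upper bound on $\Tr(\delta\alpha)$ for $\alpha$ indecomposable and $\delta$ a fixed generator of $\co_K^{\vee,+}$. A finite Magma search over elements of $\co_K^+$ of trace below this bound, written in the integral basis $\{1,\omega,\omega^2\}$ with $\omega=\zeta_7+\zeta_7^{-1}$, and then reduced modulo $\co_K^{\times,+}$ using the fundamental units, produces a short explicit list of indecomposables up to totally positive units. The unit orbit contributes only the class of $1=Q(1,0)$, which is represented trivially.

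Third, for each non-unit representative $\alpha$ on the list I would exhibit explicit $u,v\in\co_K$ with $u^2+uv+v^2=\alpha$; equivalently, viewing $Q$ as the norm form of the Eisenstein extension $\co_K[\rho]$ with $\rho$ a primitive cube root of unity, one verifies that $\alpha$ is a norm from $\co_K[\rho]$. For the short list at hand this is a routine computation in Magma, and the explicit pair $(u,v)$ can usually be guessed directly. The main obstacle I anticipate is proving \emph{completeness} of the orbit list in step two: one must show that the trace bound is actually sharp enough to capture every indecomposable, rather than merely producing a sufficient set. Once that is settled, the remaining verifications stay well within computer reach, aided by the small discriminant $|\Delta_K|=49$ and the principal codifferent of $\co_K$.
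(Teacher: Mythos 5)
Your overall strategy (reduce to a finite list of indecomposables modulo totally positive units, then exhibit explicit representations) is the same as the paper's, and steps one and three are sound: every totally positive unit of $K$ is a square, so both representability by $Q$ and indecomposability are invariant under $\co_K^{\times,+}$, and the single non-unit class is disposed of by the identity $\omega^2+\omega\cdot 1+1^2$ with $\omega=\zeta_7+\zeta_7^{-1}$, exactly as in the paper. The gap is precisely where you anticipated it: completeness of the list in step two. You claim that combining Proposition \ref{prop:indecom} with Lemma \ref{lemma:squares} yields an effective upper bound on $\Tr(\delta\alpha)$ for indecomposable $\alpha$. It does not: both results are conditional statements about a form $Q$ representing $\alpha$ (Lemma \ref{lemma:squares} moreover assumes $Q$ is universal), and their conclusions are that $\alpha$ is a square, respectively a unit --- they give no unconditional bound on traces or norms of indecomposables. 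The only implication in that circle of ideas pointing in the needed direction is that $\Tr(\delta\alpha)=\min(t_\delta)$ for some $\delta\in\co_K^{\vee,+}$ forces $\alpha$ to be indecomposable, and the paper explicitly remarks that the converse fails in this very field, with counterexample $\zeta_7^2+\zeta_7^{-2}-2$. So your trace cut-off is unsupported, and the Magma search could in principle miss indecomposables of large trace; the lemma would then be unproved for those.

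What the paper does instead is import an external, unconditional bound: by Brunotte, every indecomposable integer of $\Q(\zeta_n+\zeta_n^{-1})$ has norm at most $c_n^{\varphi(n)}$, and $c_7<1.6$, so every indecomposable of $K$ has norm at most $16$. A finite Magma check of the finitely many candidates of norm at most $16$ (up to multiplication by totally positive units) then shows the only non-unit indecomposable class has norm $7$, namely $(\zeta_7^2+\zeta_7^{-2}+2)+(\zeta_7+\zeta_7^{-1})+1$, which is the element your step three would handle. To repair your argument, replace the appeal to Proposition \ref{prop:indecom} and Lemma \ref{lemma:squares} by some genuine a priori bound on indecomposables (Brunotte's norm bound being the one used in the paper, and a norm bound is just as searchable as a trace bound); the rest of your outline then goes through.
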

\begin{proof}
	All totally positive units are squares in $\co_K$, and so $Q$ represents all of them, as $Q$ represents~1. 
	
	Let us now find all the indecomposable elements in $\co_K^+$. Brunotte \cite[Korollar 2]{Br2} proved that the norm of every indecomposable integer in a general real cyclotomic field $\Q(\zeta_n+\zeta_n^{-1})$ is at most $c_n^{\varphi(n)}$, where 
	$\varphi(n)$ is Euler's totient function and $c_n$ is certain constant defined using the unit group of $\Q(\zeta_n)$ (this is the constant denoted $c_{\Q^{(n)}}$ in \cite{Br} and $c(E_n)$ in \cite{Br2}).
	When $n=7$, Brunotte \cite[Table~2]{Br} also showed that $c_7<1.6$, and so each indecomposable has norm at most 16.
	We then check in Magma all these candidate elements to show that the only non-unit indecomposable integer of $K$ (up to multiplication by totally positive units) is an element of norm $7$, which can be written as \[(\zeta_7^2+\zeta_7^{-2}+2)+(\zeta_7+\zeta_7^{-1})+1.\] 
	
	This element is represented by $Q$, as we have that
	\begin{align*}
	(\zeta_7^2+\zeta_7^{-2}+2)+(\zeta_7+\zeta_7^{-1})+1&=(\zeta_7+\zeta_7^{-1})^2+(\zeta_7+\zeta_7^{-1})+1
	=x^2+xy+y^2,
	\end{align*}
	where $x= \zeta_7+\zeta_7^{-1}$ and $y=1$.
\end{proof}

Strictly speaking, we do not need this lemma for the following theorem, but it
gives us some important clues concerning the possible shape of a universal $\Z$-form over $\QQ(\zeta_7+\zeta_7^{-1})$. In particular, it may be advantageous for it to contain $x^2+xy+y^2$ as a subform.
This is indeed the case for the form $x^2+y^2+z^2+w^2+xy+xz+xw$, whose universality over $\QQ(\zeta_7+\zeta_7^{-1})$ we will now show. This form is the norm form for a subring of the quaternions, which was used by Deutsch \cite{De2} to prove its universality over ${\Q(\sqrt 5)}$; this form is also universal over $\Z$.

\begin{theorem}\label{prop:7univ}
	Let $K=\QQ(\zeta_7+\zeta_7^{-1})$. Then the quadratic form $Q(x,y,z,w)=x^2+y^2+z^2+w^2+xy+xz+xw$ is universal over $\co_K$. 
\end{theorem}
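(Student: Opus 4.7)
The plan is to recognize $Q$ as the reduced norm form of an order in a quaternion algebra over $K$ and then exploit quaternion-arithmetic multiplicativity. In the Hamilton quaternions $\HH = \Q\langle i, j\rangle$ with $i^2 = j^2 = -1$, $ij = -ji = k$, set $\omega := (1+i+j+k)/2$. A direct expansion of
\[
y + x\omega + zi + wj = (y + x/2) + (x/2+z)i + (x/2+w)j + (x/2)k
\]
gives
\[
N\bigl(y + x\omega + zi + wj\bigr) = x^2 + y^2 + z^2 + w^2 + xy + xz + xw = Q(x,y,z,w).
\]
A check that $\omega i$, $\omega j$, $i\omega$, $j\omega$ all lie in $\co_K \oplus \co_K\omega \oplus \co_K i \oplus \co_K j$ (in fact $\omega i = -\omega + i + j$, etc.) shows that this module is a genuine order $\mathcal{O}$ in the totally definite quaternion algebra $B := K\otimes_\Q \HH$. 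Consequently, $Q$ represents $\alpha \in \co_K^+$ over $\co_K$ if and only if $\alpha$ is the reduced norm of some element of $\mathcal{O}$.

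I would then reduce the theorem to representing totally positive \emph{primes}. The field $K = \Q(\zeta_7+\zeta_7^{-1})$ has class number one, and one checks by hand that $\pm 1, \pm\omega, \pm(\omega+1), \pm(\omega^2 - 1)$ realise all eight signatures, so every totally positive unit is a square in $\co_K$. Hence every $\alpha \in \co_K^+$ factors as $\alpha = u\pi_1\cdots\pi_r$ with $u \in \co_K^{\times,+}$ a square (and therefore already a reduced norm of an element of $\co_K \subset \mathcal{O}$) and each $\pi_i$ a totally positive prime. By multiplicativity of the reduced norm, it suffices to show that every totally positive prime of $\co_K$ is the reduced norm of some $q \in \mathcal{O}$.

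The main obstacle is thus the global arithmetic of $\mathcal{O}$. Locally, at every finite prime $\mathfrak{p}$ of $\co_K$, the reduced norm $\mathcal{O}_\mathfrak{p} \to \co_{K,\mathfrak{p}}$ is surjective (irrespective of whether $B$ is split or ramified at $\mathfrak{p}$), so the obstruction is purely global; but because $B$ is totally definite, the Eichler condition fails and strong approximation is unavailable. The cleanest route I foresee is a finite Magma computation verifying, in direct analogy with the classical proof of Hurwitz's theorem over $\Z$, that $\mathcal{O}$ is left-Euclidean with respect to the reduced norm (equivalently, has type number one), which immediately produces quaternionic representations of all primes. A more hands-on alternative is to start from the explicit representations $Q(0,0,1,1) = 2$ and (from Lemma \ref{prop:7ind}) the representation of the unique non-unit indecomposable $\beta = y^2 + y + 1$ of norm $7$, and then reduce any totally positive prime of larger norm inductively via division-with-remainder in $\mathcal{O}$.
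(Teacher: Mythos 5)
Your reduction steps are fine: the identity $N(y+x\omega+zi+wj)=Q(x,y,z,w)$ is correct, $\mathcal O=\co_K\oplus\co_K\omega\oplus\co_K i\oplus\co_K j$ is indeed an order (in fact a maximal one, of reduced discriminant $2\co_K$) in the totally definite algebra $K\otimes_\Q\HH$, and since $K$ has narrow class number one and all totally positive units are squares, universality of $Q$ does follow from the statement that every totally positive prime element of $\co_K$ is a reduced norm from $\mathcal O$. The genuine gap is the arithmetic input you invoke to get that statement. ``Left-Euclidean with respect to the reduced norm'' is \emph{not} equivalent to type number one: Euclidean implies one-sided ideal class number one (hence type number one), but not conversely, and type number one alone does not give principality of the right ideals of reduced norm $\mathfrak p$ that your prime-representation argument needs. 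Moreover, norm-Euclideanity of $\mathcal O$ is a strong covering statement (for every $x\in B$ there must exist $q\in\mathcal O$ with $\mathrm{Nrd}(x-q)$ totally smaller than $1$); it is not a routine finite Magma check, it is not known to hold here, and your ``hands-on alternative'' of division with remainder is just this unproved property again. What your argument actually requires, and what you should verify instead, is that the right ideal class number $h(\mathcal O)$ equals $1$ (computable via Eichler's mass formula, e.g.\ in Magma); then for each prime $\mathfrak p$ a right ideal of reduced norm $\mathfrak p$ is principal, its generator has totally positive reduced norm generating $\mathfrak p$, and adjusting by a unit square finishes the prime case. As written, the proof has a hole exactly at its load-bearing step.

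For comparison, the paper argues on the quadratic-form side: it cites Kirschmer's classification of one-class genera (equivalently, checks via the mass formula that the genus of $Q$ over $K$ has a single class, mass $\tfrac1{1152}$), so the local--global principle holds for $Q$, and then verifies local universality at the real places, the non-dyadic places (unimodular rank $4$), and the inert dyadic place by checking square classes. Your quaternionic route, once repaired as above, is essentially the same computation in different clothing -- the class number of the lattice $(\mathcal O,\mathrm{Nrd})$ and the ideal class number of $\mathcal O$ carry the same content -- but it buys a cleaner reduction to primes via multiplicativity in place of the local representation checks, at the cost of needing the narrow-class-number-one and unit-signature facts explicitly.
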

\begin{proof}
	We use genus theory (and the mass formula) in the proof, following similar lines as the proof of Theorem 3.1 in \cite{CKR}. Let us recall that two quadratic forms over a number field $K$ are in the same genus if they are equivalent at every completion of $K$. Therefore, if the genus of a quadratic form $Q$ consist of only one class, then the local-global principle holds for $Q$. Specifically, 
	in this case $Q$ represents all the elements $\alpha\in\co_K$ that it represents locally everywhere, i.e., all $\alpha$ that are represented over all the archimedean completions and over the rings of integers of all the non-archimedean completions.
	
	All the one-class genera of quadratic forms (of rank $\geq 3$) over all totally real fields $K\neq \Q$ were determined by Kirschmer \cite{K}; there are 471 such genera.
	Our form $Q=x^2+y^2+z^2+w^2+xy+xz+xw$ over $K=\QQ(\zeta_7+\zeta_7^{-1})$ is one of them, and so we know that the local-global principle holds for $Q$.
	Let us note that the classification uses Siegel's mass formula \cite{Si4} and that one can directly check in Magma that the class number of $Q$ over $K$ is 1 
	(the mass of $Q$ is $\frac{1}{1152}$ and the order of the automorphism group of $Q$ is $1152$). 
	
	Thanks to the local-global principle, it now suffices to show that $Q$ is universal locally.
	
	There are three archimedean places, all of them real, and $Q$ clearly represents all positive elements over each of them, as $Q$ is positive definite. 
	
	Over all the non-dyadic places, 2 is a unit, and so $Q$ is unimodular (i.e., the Gram matrix of $Q$ is invertible). 
	A well-known theorem \cite[92:1b]{O1} says that \emph{if $q$ is a unimodular quadratic form of rank $\geq 3$ over a non-dyadic local field $F$, then $q$ represents all the elements of the ring of integers $\co_F$.}
	As the rank of $Q$ is $4$, it follows that $Q$ is universal there too. 
	
	Finally, 2 is inert in $K$. We directly check that $Q$ represents all the square classes over the corresponding 2-adic completion of $K$, which is the degree $3$ extension of $\Q_2$ (and so contains $32$ square classes as we directly compute).
	
	Hence $Q$ is indeed universal over $\co_K$.
\end{proof}

From Hilbert reciprocity law it follows that there cannot exist a universal quadratic form over $\QQ(\zeta_7+\zeta_7^{-1})$ of smaller rank than 4  \cite[Lemma 3]{EK}. 

Let us remark that another quaternary quadratic form, $Q'=x^2+xy+y^2+z^2+zw+w^2$, that was also considered by Deutsch \cite{De}, appears to be universal over $\QQ(\zeta_7+\zeta_7^{-1})$ as well. However, this form has class number 2, and so we have not proved its universality. Deutsch proved the universality of $Q'$ over ${\Q(\sqrt 5)}$ and also showed that it is not universal over several real quadratic fields of small discriminant. By our Theorem \ref{Thm:1} it now follows that it is not universal over any other real quadratic field.

\

Finally, let us show a general proposition that provides a way of constructing a universal form from a quadratic form that represents all indecomposable integers.

\begin{prop}\label{prop:ind-univ}
	Let $K$ be a totally real number field of degree $d$ and $Q$ a totally positive quadratic form over $\co_K$ of rank $r$ that represents all indecomposable integers.
	Then there is $m\in\mathbb N$ such that the orthogonal sum
	$Q^{\perp m}$  is universal over $\co_K$. In fact, $m$ is the product of the Pythagoras number $\mathrm P(\co_K)$ with the number of indecomposable integers in $\co_K$ up to multiplication by squares of units.
\end{prop}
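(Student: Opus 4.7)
The plan is to show that any $\alpha\in\co_K^+$ can be written as a sum of at most $m$ values of $Q$ by first decomposing $\alpha$ into indecomposables and then controlling the total number of summands via the Pythagoras number. I would fix representatives $\alpha_1,\dots,\alpha_k$ of all classes of indecomposables in $\co_K^+$ modulo multiplication by squares of units (finiteness of $k$ is implicit in the proposition and follows from known norm bounds on indecomposables in totally real fields, of the kind already invoked in the proof of Lemma~\ref{prop:7ind}), and pick $v_i\in\co_K^r$ with $Q(v_i)=\alpha_i$ using the hypothesis that $Q$ represents every indecomposable. Note that since $Q$ has $\co_K$-coefficients, $Q(\delta v_i)=\delta^2\alpha_i$ for every $\delta\in\co_K$.

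Next, I would decompose an arbitrary $\alpha\in\co_K^+$ as $\alpha=\sum_{j=1}^N\beta_j$ with each $\beta_j$ indecomposable; such a decomposition exists by induction on $\mathrm{N}(\alpha)$, using Lemma~\ref{lemma:norm_bound}a) to guarantee strict decrease of the norm whenever $\alpha=\alpha'+\alpha''$ is split into totally positive summands. Writing each $\beta_j=\varepsilon_j^2\alpha_{i_j}$ for suitable $\varepsilon_j\in\co_K^\times$ and $i_j\in\{1,\dots,k\}$, and then grouping the summands by $i_j$, I obtain
\[
\alpha=\sum_{i=1}^k\gamma_i\alpha_i,\qquad \gamma_i:=\sum_{j:\,i_j=i}\varepsilon_j^2\in\mathrm{Sq}(\co_K),
\]
with the convention $\gamma_i=0$ whenever no indecomposable summand of type $i$ appears.

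Setting $p:=\mathrm P(\co_K)$ (finite by Corollary~\ref{cor:pythagoras}), the definition of the Pythagoras number lets me write each $\gamma_i=\sum_{\ell=1}^p\delta_{i,\ell}^2$ with $\delta_{i,\ell}\in\co_K$ (padding with zeros when $\gamma_i$ requires fewer squares). Therefore
\[
\alpha=\sum_{i=1}^k\sum_{\ell=1}^p\delta_{i,\ell}^2\alpha_i=\sum_{i=1}^k\sum_{\ell=1}^pQ(\delta_{i,\ell}v_i),
\]
which is represented by $Q^{\perp pk}$. Taking $m:=pk=\mathrm P(\co_K)\cdot k$ yields the proposition. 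The conceptual point is that grouping by indecomposable type converts the potentially unbounded number of indecomposables appearing in the decomposition of $\alpha$ into a bounded sum-of-squares problem controlled uniformly by $\mathrm P(\co_K)$. The argument is essentially algebraic bookkeeping, and the only nontrivial inputs beyond the hypothesis are the finiteness of $\mathrm P(\co_K)$ and of $k$; I do not foresee any serious obstacle.
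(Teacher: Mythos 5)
Your proof is correct and follows essentially the same route as the paper: decompose $\alpha$ into indecomposables, group them by representative modulo squares of units (finitely many by Brunotte's norm bound), bound each resulting sum of squares of units by the Pythagoras number $\mathrm P(\co_K)$, and use $Q(\delta v_i)=\delta^2\alpha_i$ to represent each grouped term by $Q^{\perp \mathrm P(\co_K)}$. No essential differences.
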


In particular, if there is a $\Z$-form that represents all indecomposables such as in Lemma \ref{prop:7ind}, then there is a universal $\Z$-form over $\co_K$. So if we were interested only in the existence of a universal $\Z$-form, we could have used this proposition instead of the specific (and much stronger!) construction of Theorem \ref{prop:7univ}.

\begin{proof}
	Let $B$ be a set containing certain indecomposable integers so that every indecomposable is of the form $\varepsilon^2\beta$ for some $\beta\in B$ and $\varepsilon\in\co_K^{\times}$. 
	Brunotte \cite{Br2} proved that every indecomposable has norm less than a constant $c_{\co_K}$ (depending only on $K$). Thus the set $B$ is finite.
	
	Every element of $\alpha\in\co_K^+$ can be expressed as the sum of indecomposables. By combining indecomposables $\beta\varepsilon_i^2$ with the same representative $\beta\in B$, we see that $\alpha$ can be written as a sum $\sum_{\beta\in B} \beta\sigma_\beta$, where each $\sigma_\beta$ is a sum of squares of units.
	The Pythagoras number of $\co_K$ is finite (cf. Corollary \ref{cor:pythagoras}), and so there is a constant $c=f(d)$ such that each $\sigma_\beta$ is a sum of $c$ squares, i.e., it is represented by the form 
	$x_1^2+\dots+x_c^2$.
	
	Each $\beta\in B$ is represented by the form $Q$, and so each of the elements $\beta\sigma_\beta=\beta(s_1^2+\dots+s_c^2)$ (for some $s_1,\dots, s_c\in\co_K$) is represented by $Q^{\perp c}$. The element $\alpha$ is the sum of $\# B$ such elements $\beta\sigma_\beta$, and so it is represented by the form 
	$\left(Q^{\perp c}\right)^{\perp (\# B)}=Q^{\perp (\# B\cdot c)}$.
\end{proof}

Let us conclude the article by the example of the form $Q=x^2+xy+y^2$ over $K=\QQ(\zeta_7+\zeta_7^{-1})$. In the proof of Lemma \ref{prop:7ind}, we have seen that, up to multiplication by units, there are exactly two indecomposables, 1 and $\beta=\zeta_7^2+\zeta_7^{-2}+\zeta_7+\zeta_7^{-1}+3$. As every totally positive unit in $\co_K$ is a square, we see that $\# B=2$.
We are in a number field of degree $d=3$, and so the Pythagoras number is $\leq 6$ by Corollary \ref{cor:pythagoras}. Thus the $\Z$-form $Q^{\perp 12}$ of rank 24 is universal over $K$.

Further, as pointed out by the anonymous referee, observing that $2\beta$ and $3\beta$ are sums of squares, this can be improved to show that the $\Z$-form $Q^{\perp 6}+x_1^2+\dots+x_6^2$ of rank 18 is universal over $K$.

\end{document}